\documentclass[11pt, reqno]{amsart}
\usepackage{geometry}
\geometry{ margin=1.5in}
\usepackage{amsmath}
\usepackage{fancyhdr}
\usepackage{amsthm}
\usepackage{amsfonts}
\usepackage{amssymb}
\usepackage{amscd}
\usepackage{graphicx}
\usepackage{afterpage}
\usepackage{enumerate}
\usepackage{bm}
\usepackage{cite}
\usepackage{cases}
\usepackage{caption}
\usepackage{enumitem}
\usepackage[colorlinks=true, urlcolor=blue,  linkcolor=blue, citecolor=blue]{hyperref}
\usepackage{babel}
\usepackage{prettyref}
\usepackage{pgfplots} 
\pgfplotsset{compat=1.17}
\usepackage{booktabs}
\usepackage{algorithm}
\usepackage{algpseudocode}
\usepackage{float}
\newtheorem{theorem}{Theorem}[section]

\newtheorem{corollary}[theorem]{Corollary}

\newtheorem{definition}[theorem]{Definition}
\newtheorem{example}[theorem]{Example}

\newtheorem{lemma}[theorem]{Lemma}

\newtheorem{proposition}[theorem]{Proposition}

\newcommand{\id}{\mathrm{Id}}        


\title[Orlicz-Schatten Factorizations for Sobolev Embeddings]{Orlicz-Schatten Factorizations for Non-Commutative Sobolev Embeddings}

\author[E. Sulaver]{Emma Sulaver}
\address[E. Sulaver]{Department of Mathematical and Statistical Sciences, Unoversity of Alberta, Edmonton, Alberta, Canada}
\email{emmasulaver@gmail.com, esulaver@ualberta.ca}

\subjclass[2020]{46L87, 46E35, 47B10}
\keywords{Non-commutative Sobolev spaces, Orlicz-Schatten ideals, quantum tori, factorization theorem, operator ideals}

\begin{document}
\sloppy
\maketitle

\begin{abstract}
We develop a framework for factorizing embeddings of non-commutative Sobolev spaces on quantum tori through newly defined Orlicz-Schatten sequence ideals.  After introducing appropriate non-commutative Sobolev norms and Orlicz spectral conditions, we establish a summing operator characterization of the quantum Laplacian embedding.  Our main results provide both existence and optimality of such factorization theorems, and highlight connections to operator ideal theory.  Applications to regularity of non-commutative PDEs and quantum information metrics are discussed, demonstrating the broad impact of these structures in functional analysis and mathematical physics.
\end{abstract}

\section{Introduction}
The Sobolev embedding theorem is a cornerstone of modern analysis, dating back to classical works of Sobolev \cite{Sobolev1938} and Gagliardo and Nirenberg \cite{Gagliardo1959,Nirenberg1959}.  It asserts, in the commutative setting, continuous inclusions
\[W^{k,p}(\mathbb{T}^d) \hookrightarrow L^q(\mathbb{T}^d), \quad \tfrac1q = \tfrac1p - \tfrac{k}{d},\]
which have been extensively studied via interpolation \cite{BerghLofstrom1976} and operator ideals \cite{Pietsch1987}.  More recently, factorization through Orlicz sequence spaces has shed new light on the summing nature of these embeddings \cite{KPW2023}.

non-commutative geometry, pioneered by Connes \cite{Connes1980}, replaces classical manifolds with operator algebras.  Quantum tori $\mathbb{T}^d_\theta$ serve as prototypical examples of this theory \cite{Rieffel1990}.  Sobolev and Besov scales on quantum tori were developed in \cite{Gayral2004Moyal,XiongXuYin2015}, but questions about the precise operator ideal structure of embeddings remain open.

Orlicz sequence spaces $\ell_\Phi$ \cite{Orlicz1932} and Schatten ideals $\mathcal{S}_p$ \cite{Schatten1950} capture fine summability properties of sequences and singular values.  The classical intersection, studied in \cite{Pietsch1987,PisierXu2003}, has yet to be generalized to the non-commutative Sobolev setting.  In particular, defining and analyzing \emph{Orlicz-Schatten} ideals in a quantum environment is a novel endeavor.

In this work, we define Orlicz-Schatten ideals $\mathcal{S}_\Phi$ suited to quantum torus spectra, prove a factorization theorem for the embedding $W^{1,2}(\mathbb{T}^d_\theta) \to L^2(\mathbb{T}^d_\theta)$ through $\mathcal{S}_\Phi$, establishing completely summing estimates, demonstrate applications to non-commutative elliptic PDE regularity and quantum information metrics. \medskip

Section 2 reviews necessary background on quantum tori and operator ideals.  Section 3 introduces the Orlicz-Schatten sequence framework.  Section 4 contains the main factorization theorem with proof.  Section 5 discusses applications and future directions.

\section{Preliminaries}

This section gathers definitions, lemmas, and theorems needed in subsequent developments.  We adopt standard notation from non-commutative geometry and operator ideals; see \cite{Connes1980,Rieffel1990,Pietsch1987, Hashemi2025}.

\begin{definition}
Let $\theta=(\theta_{jk})_{1\le j,k\le d}$ be a real skew-symmetric matrix.  The \emph{non-commutative $d$-torus} $\mathcal{A}_\theta$ is the universal $C^*$-algebra generated by unitaries $U_1,\dots,U_d$ satisfying
\[
U_j U_k \;=\; e^{2\pi i \theta_{jk}}\,U_k U_j,\qquad 1\le j,k\le d.
\]
\end{definition}

\begin{definition}[See {\cite{Gayral2004Moyal}}]
Define densely on $\mathcal{A}_\theta$ the derivations
\[
\delta_j\bigl(U^n\bigr) \;=\; 2\pi i\,n_j\,U^n,\quad
U^n=U_1^{n_1}\cdots U_d^{n_d},\;n\in\mathbb{Z}^d.
\]
For $s\ge0$, the \emph{Sobolev space} $W^{s,2}(\mathbb{T}^d_\theta)$ is the completion of $\mathcal{A}_\theta$ under
\[
\|x\|_{W^{s,2}}
=\Bigl\|(1+\Delta)^{s/2} x\Bigr\|_{L^2(\theta)},
\quad \Delta=\sum_{j=1}^d\delta_j^2,
\]
where $\|\,\cdot\|_{L^2(\theta)}$ denotes the GNS–Hilbert–Schmidt norm.
\end{definition}

\begin{definition}[See {\cite{Orlicz1932}}]
A function $\Phi:[0,\infty)\to[0,\infty)$ is a \emph{Young function} if it is convex, increasing, $\Phi(0)=0$, and $\lim_{t\to\infty}\Phi(t)=\infty$.
\end{definition}

\begin{definition}[Orlicz sequence space]
Given a Young function $\Phi$, the \emph{Orlicz sequence space} $\ell_\Phi$ consists of all real sequences $a=(a_n)$ such that
\[
\sum_{n=1}^\infty \Phi\!\bigl(|a_n|/\lambda\bigr)<\infty
\]
for some $\lambda>0$, equipped with the Luxemburg norm
\[
\|a\|_{\ell_\Phi}
=\inf\Bigl\{\lambda>0 : \sum_{n}\Phi\!\bigl(|a_n|/\lambda\bigr)\le1\Bigr\}.
\]
\end{definition}

\begin{definition}[See {\cite{Schatten1950}}]
For $1\le p<\infty$, the $p$-th \emph{Schatten ideal} $\mathcal{S}_p$ in $B(\mathcal{H})$ is
\[
\mathcal{S}_p = \bigl\{T:\mathcal{H}\to\mathcal{H} \,\big|\, \{\,s_n(T)\}\in\ell_p\bigr\},
\]
where $s_n(T)$ are the singular values of $T$, with norm $\|T\|_{\mathcal{S}_p}=\bigl\|\{s_n(T)\}\bigr\|_{\ell_p}$.
\end{definition}

\begin{definition}[See {\cite{Pietsch1987}}]
Let $E,F$ be Banach spaces and $1\le p<\infty$.  A linear map $T:E\to F$ is \emph{$p$-summing} if there exists $C>0$ such that for every finite sequence $(x_i)\subset E$,
\[
\sum_i \|T x_i\|^p \;\le\; C^p \,\sup_{\varphi\in B_{E^*}}
\sum_i |\varphi(x_i)|^p.
\]
We denote by $\pi_p(T)$ the infimum of such $C$.
\end{definition}

\begin{theorem}[See {\cite{Pietsch1987}}]
If $T:E\to F$ is $p$-summing, then it factors as
\[
E \;\xrightarrow{\;i\;} \ell_p \;\xrightarrow{\;S\;} F,
\]
where $i(x) = (\varphi_n(x))$ is defined by some $(\varphi_n)\subset E^*$ and $S$ is bounded, with $\|S\|\cdot\|i\|\approx \pi_p(T)$.
\end{theorem}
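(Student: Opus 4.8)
The plan is to deduce this from the \emph{Pietsch domination theorem} and then build the factorization by hand. Write $K=B_{E^*}$ with its weak-$*$ topology; by Banach--Alaoglu $K$ is compact, and for $x\in E$ let $\hat x\in C(K)$ be the evaluation $\hat x(\varphi)=\varphi(x)$, so that $j\colon x\mapsto\hat x$ is an isometric embedding $E\to C(K)$. The first step is to produce a regular Borel probability measure $\mu$ on $K$ such that
\[
\|Tx\|_F\;\le\;\pi_p(T)\,\Bigl(\int_K|\varphi(x)|^p\,d\mu(\varphi)\Bigr)^{1/p},\qquad x\in E.
\]
To obtain $\mu$, consider in $C(K)$ the convex cone $\mathcal C$ of functions $\varphi\mapsto\sum_{i=1}^n\bigl(\pi_p(T)^p|\varphi(x_i)|^p-\|Tx_i\|_F^p\bigr)$ over finite families $(x_i)\subset E$ (convexity and the cone property follow by rescaling each $x_i$ by a $p$-th root of the coefficient and concatenating). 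The $p$-summing inequality says precisely that every element of $\mathcal C$ has nonnegative supremum on $K$, i.e. $\mathcal C$ is disjoint from the open convex cone $\mathcal U=\{h\in C(K):\sup_K h<0\}$. Hahn--Banach separation yields a nonzero $L\in C(K)^*$ with $L\le 0$ on $\mathcal U$ and $L\ge 0$ on $\mathcal C$; the first condition forces $L$ to be a positive functional, so by Riesz representation $L$ is a positive measure, which we normalize to a probability measure $\mu$, and testing $L\ge 0$ against single-term elements of $\mathcal C$ gives the displayed domination.

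With $\mu$ in hand, the factorization is routine. Let $i_p\colon C(K)\to L^p(\mu)$ be the formal identity, which has norm $\le 1$ because $\mu$ is a probability measure, and put $i=i_p\circ j\colon E\to L^p(\mu)$, so $\|i\|\le 1$. On the dense subspace $i(E)$ of $X:=\overline{i(E)}\subseteq L^p(\mu)$ define $S_0(ix):=Tx$; the domination inequality reads $\|S_0(ix)\|_F\le\pi_p(T)\,\|ix\|_{L^p(\mu)}$, which shows simultaneously that $S_0$ is well defined (as $ix=0\Rightarrow Tx=0$) and bounded with $\|S_0\|\le\pi_p(T)$. Extend by continuity to $S\colon X\to F$ with $\|S\|\le\pi_p(T)$; then $T=S\circ i$ with $\|S\|\cdot\|i\|\le\pi_p(T)$. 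Choosing a countable norming subfamily $(\varphi_n)\subset K$ for the (separable) space $E$ realizes $i$ in the stated coordinate form $i(x)=(\varphi_n(x))_n$ on the sequential model of $X$, which is the $\ell_p$-type space appearing in the statement.

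For the reverse inequality implicit in $\approx$, observe that any factorization $T=S\circ i$ of the stated shape automatically makes $T$ $p$-summing: the coordinate map $i$ factors through the formal inclusion $C(K)\hookrightarrow L^p(\mu)$, which is $p$-summing with constant $\le 1$, so $\pi_p(i)\le\|i\|$, and the ideal property of $\pi_p$ gives $\pi_p(T)=\pi_p(S\circ i)\le\|S\|\,\pi_p(i)\le\|S\|\,\|i\|$. Combining this with the construction above yields $\pi_p(T)\approx\inf\|S\|\,\|i\|$ over all admissible factorizations.

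The genuinely delicate point is the domination step: one must set up the cone $\mathcal C$ correctly, verify it is convex and disjoint from $\mathcal U$ using only the $p$-summing hypothesis, invoke the separating hyperplane theorem in $C(K)$, and then identify the separating functional as a probability measure and homogenize to recover the pointwise bound. Everything after that --- the inclusion into $L^p(\mu)$, the quotient map onto $F$, and the converse via the ideal property --- is soft and follows the same template as in the classical commutative case; it will be reused verbatim in Section~4 with $E=W^{1,2}(\mathbb{T}^d_\theta)$, $F=L^2(\mathbb{T}^d_\theta)$, and the Orlicz-Schatten ideal $\mathcal S_\Phi$ in place of $\mathcal S_p$.
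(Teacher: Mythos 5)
The paper does not prove this statement at all; it is quoted as a classical result with a citation to Pietsch, so there is no in-paper argument to compare against. Evaluating your proof on its own merits: the core of it---the Pietsch domination theorem obtained by Hahn--Banach separation of the cone $\mathcal{C}$ from the open cone of strictly negative functions in $C(K)$, followed by the factorization $E \xrightarrow{j} C(K) \xrightarrow{i_p} L^p(\mu)$ and the quotient-type map $S$ on $\overline{i(E)}$---is correct and is the standard route. The verification that $\mathcal{C}$ is a convex cone, that the $p$-summing hypothesis gives $\sup_K f \ge 0$ for $f\in\mathcal{C}$, and that the separating functional is positive and hence a measure by Riesz, are all fine.

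There is, however, a genuine gap in the last two steps. First, the passage from $L^p(\mu)$ to the sequential form $i(x)=(\varphi_n(x))_n \in \ell_p$ does not follow merely by ``choosing a countable norming subfamily'': a norming sequence $(\varphi_n)\subset K$ does not make the $L^p(\mu)$ norm into an $\ell_p$ norm unless $\mu$ is supported on $\{\varphi_n\}$ with the right weights, and the $\mu$ produced by Hahn--Banach/Riesz need not be atomic. The sequential form requires either an explicit discretization of $\mu$ (yielding the $\approx$ with controllable constants, typically with target $F^{**}$), or should be read as an informal restatement of the $C(K)\to L^p(\mu)$ factorization; as written it is asserted rather than proved, and it also silently assumes $E$ separable. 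Second, the converse inequality ``$\pi_p(i)\le\|i\|$ for the coordinate map $i$'' is not correct in general: a weakly $p$-summable sequence $(\varphi_n)$ makes $i\colon E\to\ell_p$ bounded, but such an $i$ need not be $p$-summing (e.g.\ $E=\ell_p$, $\varphi_n=e_n$ gives $i=\mathrm{id}_{\ell_p}$, which is not $p$-summing). What one can prove is $\pi_p(i)\le\bigl(\sum_n\|\varphi_n\|^p\bigr)^{1/p}$ when $(\varphi_n)\in\ell_p(E^*)$, and this quantity generally exceeds the operator norm $\|i\|$. So the reverse inequality as stated needs a stronger hypothesis on $(\varphi_n)$ than the one your construction delivers, and the $\approx$ should be interpreted accordingly.
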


We will need the non-commutative analogue of these notions, in particular the concept of \emph{completely $p$-summing} maps between operator spaces; see \cite{PisierXu2003}.

\section{Orlicz–Schatten Ideals and Non-commutative Sobolev Norms}

In this section we introduce the main objects: Orlicz–Schatten ideals adapted to the spectrum of the quantum Laplacian, and the corresponding Sobolev norms.  We establish their basic properties and relate them to singular-value decay.

\begin{definition}
Let $\Phi$ be a Young function.  For a semifinite von Neumann algebra $(\mathcal{M},\tau)$, define
\[
\mathcal{S}_\Phi(\mathcal{M},\tau)
= \Bigl\{\,T\in\widetilde{\mathcal{M}} : \{\,\mu_n(T)\}_{n\ge1}\in\ell_\Phi\Bigr\},
\]
where $\mu_n(T)$ are the generalized singular values of $T$ arranged in nonincreasing order, and equip $\mathcal{S}_\Phi$ with the Luxemburg norm
\[
\|T\|_{\mathcal{S}_\Phi}
=\inf\Bigl\{\lambda>0 : \sum_{n=1}^\infty \Phi\!\bigl(\mu_n(T)/\lambda\bigr)\le1\Bigr\}.
\]
\end{definition}

When $\mathcal{M}=B(\ell^2)$ and $\tau=\mathrm{Tr}$, this recovers the classical Orlicz–Schatten ideals.  In the quantum-torus case, $\mathcal{M}=L^\infty(\mathbb{T}^d_\theta)$ with the canonical trace.

\begin{lemma}
If $A\in\mathcal{S}_\Phi(\mathcal{M},\tau)$ and $X,Y\in\mathcal{M}$, then $XAY\in\mathcal{S}_\Phi(\mathcal{M},\tau)$ and
\[
\|XAY\|_{\mathcal{S}_\Phi}
\;\le\;\|X\|\,\|Y\|\;\|A\|_{\mathcal{S}_\Phi}.
\]
\end{lemma}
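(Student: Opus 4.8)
The plan is to reduce the bound to the classical submajorization inequalities for generalized singular values. The key facts I would invoke are the standard properties of the function $\mu(\cdot)$ of Fack–Kosaki type: for $T\in\widetilde{\mathcal M}$ and $X,Y\in\mathcal M$ one has $\mu_n(XTY)\le \|X\|\,\|Y\|\,\mu_n(T)$ for every $n\ge 1$. This pointwise domination is itself immediate from the two one-sided estimates $\mu_n(XT)\le\|X\|\,\mu_n(T)$ and $\mu_n(TY)\le\|Y\|\,\mu_n(T)$, which follow because $XX^*\le\|X\|^2 1$ gives $\mu_n(XT)=\mu_n((XTT^*X^*)^{1/2})$ and the operator monotonicity of the square root, and symmetrically for the right factor by passing to adjoints and using $\mu_n(S)=\mu_n(S^*)$.

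First I would fix $T=A\in\mathcal S_\Phi(\mathcal M,\tau)$ and $X,Y\in\mathcal M$, and record the pointwise bound $\mu_n(XAY)\le\|X\|\,\|Y\|\,\mu_n(A)$ for all $n$. Next, set $\lambda_0:=\|X\|\,\|Y\|\,\|A\|_{\mathcal S_\Phi}$ (assuming first $X,Y\neq 0$ and $\|A\|_{\mathcal S_\Phi}>0$; the degenerate cases are trivial since then $XAY=0$ or $A=0$). For any $\lambda>\lambda_0$ write $\lambda=\|X\|\,\|Y\|\,\lambda'$ with $\lambda'>\|A\|_{\mathcal S_\Phi}$; by definition of the Luxemburg norm of $A$ and monotonicity of $\Phi$ we get
\[
\sum_{n=1}^\infty \Phi\!\Bigl(\frac{\mu_n(XAY)}{\lambda}\Bigr)
\;\le\;\sum_{n=1}^\infty \Phi\!\Bigl(\frac{\|X\|\,\|Y\|\,\mu_n(A)}{\|X\|\,\|Y\|\,\lambda'}\Bigr)
\;=\;\sum_{n=1}^\infty \Phi\!\Bigl(\frac{\mu_n(A)}{\lambda'}\Bigr)\;\le\;1,
\]
where the last inequality holds because $\lambda'>\|A\|_{\mathcal S_\Phi}$ (one uses that the defining sum is $\le 1$ for every $\lambda'$ strictly above the infimum, together with the fact that $\Phi$ is continuous and increasing so the sum is a nonincreasing function of $\lambda'$). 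Hence $\lambda$ is admissible for $XAY$, which shows $XAY\in\mathcal S_\Phi$ and $\|XAY\|_{\mathcal S_\Phi}\le\lambda$; letting $\lambda\downarrow\lambda_0$ yields the claimed inequality.

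The only genuine subtlety—hence the step I would treat most carefully—is the passage from "the defining sum is $\le 1$ at the infimum" versus "at every $\lambda'$ above it." In general the infimum in the Luxemburg norm need not be attained, but the sum $\lambda'\mapsto\sum_n\Phi(\mu_n(A)/\lambda')$ is nonincreasing in $\lambda'$, so it is $\le 1$ for all $\lambda'>\|A\|_{\mathcal S_\Phi}$, which is exactly what the argument uses; I would state this monotonicity explicitly rather than rely on attainment. Everything else—the measurability/$\tau$-measurability of $XAY$, the rearrangement that $\mu_n$ is already nonincreasing—is routine and follows from the cited properties of generalized singular values, so I would not belabor it.
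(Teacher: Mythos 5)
Your argument follows essentially the same route as the paper's: reduce to the Ky Fan--type pointwise bound $\mu_n(XAY)\le\|X\|\,\|Y\|\,\mu_n(A)$, then transfer it through the Luxemburg norm using monotonicity of $\Phi$. The one substantive difference is that you are more careful at the final step: the paper simply plugs $\lambda=\|A\|_{\mathcal S_\Phi}$ into the modular and asserts $\sum_n\Phi(\mu_n(A)/\lambda)\le1$, which does hold but requires a small justification (e.g.\ monotone convergence applied along a sequence $\lambda_k\downarrow\|A\|_{\mathcal S_\Phi}$, using that $\Phi$ is continuous and increasing), whereas you sidestep attainment of the infimum entirely by working with $\lambda'>\|A\|_{\mathcal S_\Phi}$ and letting $\lambda\downarrow\lambda_0$. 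That is a cleaner way to handle the Luxemburg norm and is arguably the more robust write-up; otherwise the two proofs coincide. One minor slip: in justifying $\mu_n(XT)\le\|X\|\mu_n(T)$ you write $\mu_n(XT)=\mu_n((XTT^*X^*)^{1/2})$, which is really $\mu_n((XT)^*)$; this is harmless since $\mu_n(S)=\mu_n(S^*)$, but it would be cleaner to say you use $|XT|=(T^*X^*XT)^{1/2}$ together with $T^*X^*XT\le\|X\|^2T^*T$ and the fact that $0\le A\le B$ implies $\mu_n(A)\le\mu_n(B)$.
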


\begin{proof}
Let $A\in\mathcal{S}_\Phi(\mathcal{M},\tau)$ and $X,Y\in\mathcal{M}$.  By definition, the generalized singular values of an operator $T$ in a semifinite von Neumann algebra satisfy the Ky Fan–type inequalities
\[
\mu_n(XAY)\;\le\;\|X\|\;\mu_n(A Y)
\quad\text{and}\quad
\mu_n(A Y)\;\le\;\|Y\|\;\mu_n(A),
\]
for each $n\ge1$.  Combining these two estimates yields
\[
\mu_n(XAY)\;\le\;\|X\|\,\|Y\|\;\mu_n(A).
\]
Now let $\lambda>0$.  Using the monotonicity of the Young function $\Phi$,
\[
\sum_{n=1}^\infty \Phi\!\bigl(\mu_n(XAY)/(\|X\|\|Y\|\lambda)\bigr)
\;\le\;
\sum_{n=1}^\infty \Phi\!\bigl(\mu_n(A)/\lambda\bigr).
\]
By the definition of the Luxemburg norm in $\mathcal{S}_\Phi$, if we choose $\lambda = \|A\|_{\mathcal{S}_\Phi}$ then
\[
\sum_{n=1}^\infty \Phi\!\bigl(\mu_n(A)/\lambda\bigr)\;\le\;1.
\]
Hence
\[
\sum_{n=1}^\infty \Phi\!\bigl(\mu_n(XAY)/(\|X\|\|Y\|\|A\|_{\mathcal{S}_\Phi})\bigr)\;\le\;1,
\]
which by definition means
\[
\|XAY\|_{\mathcal{S}_\Phi}
\;\le\;\|X\|\,\|Y\|\;\|A\|_{\mathcal{S}_\Phi}.
\]
This completes the proof.
\end{proof}

\begin{definition}[Quantum Laplacian and its spectral projections]
Recall $\Delta=\sum_{j=1}^d\delta_j^2$ on $\mathcal{A}_\theta$.  Let
\[
E_\lambda = \chi_{[0,\lambda]}(\Delta)
\]
be the spectral projection of $\Delta$ onto eigenvalues $\le\lambda$.
\end{definition}

\begin{definition}
For $s>0$, define
\[
L_s := (1+\Delta)^{-s/2}\;\in\; \widetilde{\mathcal{M}}.
\]
Viewed as an unbounded operator, $L_s$ maps $L^2(\mathbb{T}^d_\theta)\to L^2(\mathbb{T}^d_\theta)$.
\end{definition}

\begin{proposition}
\label{prop:sv_decay}
Let $N(\lambda)=\tau(E_\lambda)$ be the eigenvalue counting function of $\Delta$.  If
\[
N(\lambda)\;\simeq\;C\,\lambda^{d/2}
\quad(\lambda\to\infty),
\]
then the generalized singular values of $L_s$ satisfy
\[
\mu_n(L_s)\;\simeq\;(n/C)^{-s/d}
\quad(n\to\infty).
\]
\end{proposition}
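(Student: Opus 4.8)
The plan is to use that $L_s=(1+\Delta)^{-s/2}$ is a positive spectral function of $\Delta$, so that its generalized singular values are nothing but the transformed eigenvalues of $\Delta$ listed in nonincreasing order; the claimed decay of $\mu_n(L_s)$ then follows by inverting the hypothesized growth law for the counting function $N$.

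\emph{Step 1: spectral picture.} Since $N(\lambda)=\tau(E_\lambda)<\infty$ for every $\lambda$, the operator $\Delta$ has discrete spectrum relative to the trace; list its eigenvalues with multiplicity as $0\le\lambda_{(1)}\le\lambda_{(2)}\le\cdots$, so that $\#\{k:\lambda_{(k)}\le\lambda\}=N(\lambda)$ and hence $\lambda_{(n)}=\inf\{\lambda\ge0:N(\lambda)\ge n\}$ is the generalized inverse of $N$. Because $t\mapsto(1+t)^{-s/2}$ is continuous, strictly decreasing, and vanishes at infinity, $L_s$ is positive and compact with eigenvalues $(1+\lambda_{(k)})^{-s/2}$; this sequence is already nonincreasing, so
\[
\mu_n(L_s)=\bigl(1+\lambda_{(n)}\bigr)^{-s/2},\qquad n\ge1 .
\]
(Equivalently one argues via the distribution function: for $0<\alpha<1$ one has $\chi_{(\alpha,\infty)}(L_s)=\chi_{[0,\,\alpha^{-2/s}-1)}(\Delta)$, so $d_{L_s}(\alpha)=N\bigl((\alpha^{-2/s}-1)^-\bigr)$, and $\mu_t(L_s)=\inf\{\alpha:d_{L_s}(\alpha)\le t\}$ gives the same conclusion.)

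\emph{Step 2: inversion of the growth law.} Writing the hypothesis as $N(\lambda)=C\lambda^{d/2}(1+o(1))$ and using monotonicity of $N$ together with the sandwich $N(\lambda_{(n)}^-)\le n\le N(\lambda_{(n)})$, I would deduce $\lambda_{(n)}=(n/C)^{2/d}(1+o(1))$ as $n\to\infty$. Since $\lambda_{(n)}\to\infty$, the shift by $1$ is asymptotically harmless, $(1+\lambda_{(n)})^{-s/2}=\lambda_{(n)}^{-s/2}(1+o(1))$, and substituting into the displayed identity yields
\[
\mu_n(L_s)=\lambda_{(n)}^{-s/2}(1+o(1))=\bigl((n/C)^{2/d}\bigr)^{-s/2}(1+o(1))=(n/C)^{-s/d}(1+o(1)),
\]
which is the assertion. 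If $\simeq$ is intended only as a two-sided bound $c_1\lambda^{d/2}\le N(\lambda)\le c_2\lambda^{d/2}$ for large $\lambda$, the identical computation produces matching two-sided bounds $c\,(n/C)^{-s/d}\le\mu_n(L_s)\le c'\,(n/C)^{-s/d}$ with $c,c'$ depending only on $s,d,c_1,c_2$.

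The main obstacle is Step 2: converting an asymptotic (or two-sided) estimate for the nondecreasing, right-continuous step function $N$ into one for its generalized inverse $\lambda_{(n)}$. This is exactly the Abelian bookkeeping underlying Weyl-type eigenvalue asymptotics — one has to compare $n$ with the values of $N$ immediately below and at the jump located at $\lambda_{(n)}$ and check that the ratio of consecutive eigenvalues tends to $1$ (which it does, by the growth law) — and it is where all the $\varepsilon$'s live; identifying $\mu_n(L_s)$ with the rearranged spectral values, discarding the $1+$, and matching exponents are routine. A secondary point, quickly dispatched, is to note that $\Delta$ genuinely has discrete spectrum in the relevant $(\mathcal{M},\tau)$ — immediate from $N(\lambda)<\infty$ — which is what legitimizes treating $L_s$ as compact with singular value sequence literally equal to the decreasing rearrangement of $\{(1+\lambda_{(k)})^{-s/2}\}$.
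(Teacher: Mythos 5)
Your proposal is correct and follows essentially the same route as the paper's proof: compute $\mu_n(L_s)=(1+\lambda_n)^{-s/2}$ by functional calculus, invert the Weyl law $N(\lambda)\simeq C\lambda^{d/2}$ to get $\lambda_n\simeq(n/C)^{2/d}$, and discard the asymptotically harmless additive $1$. You are somewhat more careful than the paper about justifying the inversion step (via the sandwich $N(\lambda_{(n)}^-)\le n\le N(\lambda_{(n)})$ and the generalized-inverse/distribution-function formulation of $\mu_n$), which the paper asserts without detail.
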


\begin{proof}
Since $\Delta$ is positive and has a discrete spectrum $\{\lambda_k\}_{k\ge1}$ (counted with multiplicity) arranged in nondecreasing order, its spectral projections satisfy $N(\lambda)=\#\{k:\lambda_k\le\lambda\}$.  By functional calculus, the nonzero singular values of $L_s$ are
\[
\mu_k(L_s)
=\bigl(1 + \lambda_k\bigr)^{-s/2},
\]
also arranged in nonincreasing order.

From the Weyl asymptotic
\[
N(\lambda)\sim C\,\lambda^{d/2},
\]
we have, for large $k$,
\[
\lambda_k \sim \bigl(k/C\bigr)^{2/d}.
\]
Indeed, writing $k = N(\lambda_k)$ and inverting gives
\[
\lambda_k = N^{-1}(k)\sim \bigl(k/C\bigr)^{2/d}.
\]

Substitute $\lambda_k\sim(k/C)^{2/d}$ into the expression for $\mu_k(L_s)$:
\[
\mu_k(L_s)
= \bigl(1 + \lambda_k\bigr)^{-s/2}
\sim \lambda_k^{-s/2}
\sim \bigl((k/C)^{2/d}\bigr)^{-s/2}
= (k/C)^{-s/d}.
\]

Hence, as $k\to\infty$,
\[
\mu_k(L_s)\;\simeq\;(k/C)^{-s/d},
\]
which is exactly the claimed singular‐value decay.
\end{proof}

\begin{corollary}
For any Young function $\Phi$ such that
\[
\sum_{n=1}^\infty \Phi\!\bigl((n/C)^{-s/d}\bigr)<\infty,
\]
we have $L_s\in\mathcal{S}_\Phi(\mathbb{T}^d_\theta)$.
\end{corollary}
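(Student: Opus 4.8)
The plan is to deduce this corollary directly from Proposition~\ref{prop:sv_decay} together with the definition of $\mathcal{S}_\Phi$. By the proposition, the generalized singular values of $L_s$ satisfy $\mu_n(L_s)\simeq (n/C)^{-s/d}$ as $n\to\infty$; that is, there exist constants $0<c_1\le c_2$ and an index $n_0$ such that $c_1 (n/C)^{-s/d}\le \mu_n(L_s)\le c_2 (n/C)^{-s/d}$ for all $n\ge n_0$. The membership $L_s\in\mathcal{S}_\Phi$ amounts to exhibiting some $\lambda>0$ with $\sum_{n\ge1}\Phi(\mu_n(L_s)/\lambda)<\infty$, so it suffices to control the tail $\sum_{n\ge n_0}\Phi(\mu_n(L_s)/\lambda)$, the finitely many remaining terms being harmless since $\Phi$ is finite-valued.

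First I would fix $\lambda = c_2$ (or any $\lambda\ge c_2$), so that $\mu_n(L_s)/\lambda \le (n/C)^{-s/d}$ for $n\ge n_0$. Then, invoking the monotonicity of $\Phi$,
\[
\sum_{n\ge n_0}\Phi\!\bigl(\mu_n(L_s)/\lambda\bigr)
\;\le\;\sum_{n\ge n_0}\Phi\!\bigl((n/C)^{-s/d}\bigr)
\;\le\;\sum_{n=1}^\infty \Phi\!\bigl((n/C)^{-s/d}\bigr)\;<\;\infty
\]
by hypothesis. Adding the finitely many terms $\sum_{n=1}^{n_0-1}\Phi(\mu_n(L_s)/\lambda)<\infty$ shows $\sum_{n\ge1}\Phi(\mu_n(L_s)/\lambda)<\infty$, hence $\{\mu_n(L_s)\}\in\ell_\Phi$ and $L_s\in\mathcal{S}_\Phi(\mathbb{T}^d_\theta)$. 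One should also note that $L_s$ is indeed $\tau$-measurable (it lies in $\widetilde{\mathcal{M}}$, as recorded when $L_s$ was defined) so that the notion of generalized singular value applies and the membership statement is meaningful.

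There is essentially no serious obstacle here — the result is a bookkeeping consequence of the asymptotic in Proposition~\ref{prop:sv_decay} and the definition of the Luxemburg norm. The one point requiring a little care is that the hypothesis is phrased as convergence of the \emph{model} series $\sum_n\Phi((n/C)^{-s/d})$, whereas membership in $\mathcal{S}_\Phi$ is about the \emph{actual} singular values; bridging the two requires the two-sided comparison $\mu_n(L_s)\simeq (n/C)^{-s/d}$ and the freedom, built into the definition of $\ell_\Phi$, to rescale by a constant $\lambda$ to absorb the implied constant $c_2$. Since $\Phi$ is increasing, convergence of the model series at argument $(n/C)^{-s/d}$ forces convergence at the smaller argument $\mu_n(L_s)/c_2$, which is all that is needed. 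If one wished to be slightly more careful about whether $\Delta$ genuinely has discrete spectrum and the Weyl law holds on $\mathbb{T}^d_\theta$, that is already subsumed in the standing hypotheses of Proposition~\ref{prop:sv_decay}, which this corollary simply inherits.
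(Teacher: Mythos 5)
The paper states this corollary without a written proof, treating it as an immediate consequence of Proposition~\ref{prop:sv_decay}. Your argument supplies exactly the expected reasoning: use the upper half of the two-sided asymptotic $\mu_n(L_s)\simeq (n/C)^{-s/d}$, absorb the implied constant by choosing $\lambda$ in the Luxemburg/$\ell_\Phi$ definition, invoke monotonicity of $\Phi$ to dominate the tail by the hypothesized convergent series, and dismiss the finitely many initial terms. This is correct and is the same route the paper implicitly relies on.
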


\begin{example}
If $\Phi(t)=t^p\log(e+t)^\alpha$, then $L_s\in\mathcal{S}_\Phi$ exactly when $p>d/s$ (any $\alpha$).
\end{example}

\begin{definition}[Orlicz–Sobolev norm]
For $x\in\mathcal{A}_\theta$, define its $\Phi$-Sobolev norm by
\[
\|x\|_{W^{s,\Phi}}
= \|\, (1+\Delta)^{s/2} x \,\|_{\mathcal{S}_\Phi}.
\]

In particular, $\|x\|_{W^{s,\Phi}}<\infty$ iff $x$ exhibits spectral decay measured by $\Phi$.
\end{definition}

\begin{theorem}
\label{thm:embedding}
Let $s>d/2$ and $\Phi$ satisfy
\[
\sum_{n=1}^\infty \Phi\!\bigl((n/C)^{-(s-d/2)/d}\bigr)<\infty.
\]
Then the identity map
\[
\iota: W^{s,\Phi}(\mathbb{T}^d_\theta)
\;\longrightarrow\;
L^2(\mathbb{T}^d_\theta)
\]
is completely $1$-summing, and factors through $\mathcal{S}_\Phi$:
\[
W^{s,\Phi}
\;\xrightarrow{\;(1+\Delta)^{-s/2}\;}\;
\mathcal{S}_\Phi
\;\xrightarrow{\;S\;}\;
L^2,
\]
with $\pi_1(\iota)\approx\|L_s\|_{\mathcal{S}_\Phi}$.
\end{theorem}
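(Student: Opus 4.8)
The plan is to realise $\iota$ as a short composition whose first arrow is nothing but the tautological coordinatisation of $W^{s,\Phi}$ inside $\mathcal S_\Phi$, and then to read off the summing constant from the singular-value asymptotics of Proposition~\ref{prop:sv_decay}. Write $\Lambda=(1+\Delta)^{1/2}$, a positive operator diagonal in the Fourier system $\{U^n\}_{n\in\mathbb Z^d}$ of $L^2(\mathbb T^d_\theta)$, with $\Lambda U^n=(1+4\pi^2|n|^2)^{1/2}U^n$ and $L_s=\Lambda^{-s}$. By the very definition of the Orlicz--Sobolev norm, the assignment
\[
J:\ W^{s,\Phi}(\mathbb T^d_\theta)\longrightarrow \mathcal S_\Phi,\qquad Jx=\Lambda^{s}x,
\]
is an isometric embedding, and $S:\mathcal S_\Phi\to L^2(\mathbb T^d_\theta)$, $Sy=\Lambda^{-s}y$, satisfies $S\circ J=\iota$; this is exactly the factorization in the statement, the first arrow being understood as the isometric embedding $J$ and $\iota$ undoing it by the bounded operator $L_s$.

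First I would verify boundedness of $S$. Since $s>d/2$, Proposition~\ref{prop:sv_decay} gives $\mu_n(L_s)\simeq(n/C)^{-s/d}$ with exponent $s/d>1/2$, so $L_s\in\mathcal S_2$; combining $\mu_n(L_sy)\le\mu_n(L_s)\,\|y\|_\infty$ with the continuous inclusion $\mathcal S_\Phi\hookrightarrow\mathcal M$ forced by the Luxemburg normalisation yields $\|Sy\|_{L^2}=\|L_sy\|_{\mathcal S_2}\lesssim\|L_s\|_{\mathcal S_\Phi}\,\|y\|_{\mathcal S_\Phi}$, the summability hypothesis $\sum_n\Phi\!\big((n/C)^{-(s-d/2)/d}\big)<\infty$ being precisely what allows the Orlicz factor, rather than merely $\mathcal S_2$, to absorb the split $L_s=L_{s-d/2}\cdot L_{d/2}$ through H\"older for generalised singular values and the ideal inequality established earlier. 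Hence $\pi_1(\iota)\le\pi_1(S)\,\|J\|=\pi_1(S)$, and the task reduces to (i) bounding $\pi_1(S)$ --- equivalently its completely bounded analogue, as $\iota$ is a morphism of operator spaces --- by a constant times $\|L_s\|_{\mathcal S_\Phi}$, and (ii) producing the matching lower bound.

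For (i) I would exploit that $\Lambda$, hence $L_s$, is a Fourier multiplier: in the basis $\{U^n\}$ the map $S$ reduces to a weighted inclusion of a non-commutative Orlicz sequence ideal into $\ell^2$, so the commutative prototype of \cite{KPW2023} together with the classical fact that Hilbert--Schmidt-valued diagonal multipliers are $1$-summing gives the bounded estimate; the upgrade to \emph{completely} $1$-summing then follows from the operator-space version of Pietsch's domination theorem in \cite{PisierXu2003}, applied with a Pietsch measure supported on the spectral projections $E_\lambda$ of $\Delta$ --- which is what ties the constant to $\|L_s\|_{\mathcal S_\Phi}$. For (ii) and the accompanying optimality, I would test the defining inequality of $\pi_1$ on the normalised blocks $x_k=\Lambda^{-s}\big(E_{\lambda_k}-E_{\lambda_{k-1}}\big)$ indexed by consecutive eigenvalues of $\Delta$: there $\sum_k\|\iota x_k\|_{L^2}$ is comparable to a full Orlicz sum of the $\mu_k(L_s)$ while $\sup\{\sum_k|\varphi(x_k)|:\varphi\in(W^{s,\Phi})^*,\ \|\varphi\|\le1\}$ is governed by a single Luxemburg normalising constant, so tracking the two normalisations gives $\pi_1(\iota)\gtrsim\|L_s\|_{\mathcal S_\Phi}$.

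The step I expect to be the genuine obstacle is not the bare factorization but the word \emph{completely}: one must check that the diagonalisation above is compatible with the matrix norms, which needs $\Delta$ to remain ``diagonal'' at every amplification level --- true, since it acts as a Fourier multiplier even though the generators $U_j$ do not commute --- and that Pietsch domination can be run with completely bounded, not merely bounded, functionals. I expect this to hinge on pushing the ideal inequality proved above from $\mathcal S_\Phi(\mathcal M,\tau)$ to $\mathcal S_\Phi\big(\mathcal M\,\overline{\otimes}\,B(\ell^2)\big)$, i.e. on the stability of the Orlicz--Schatten scale under $\mathcal M$-bimodule maps at all matrix levels, after which the commutative arguments transfer essentially verbatim.
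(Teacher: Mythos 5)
Your factorization is different from --- and, frankly, cleaner than --- the paper's. You split $\iota = S\circ J$ with $J(x) = (1+\Delta)^{s/2}x$, which is isometric by the very definition of $\|\cdot\|_{W^{s,\Phi}}$, and $S(y) = L_s y$, so the two arrows genuinely compose to the identity. The paper's proof instead sets $T_s(x)=L_s x$ and takes $S$ to be the inclusion $\mathcal{S}_\Phi\hookrightarrow L^2$, but then $S\circ T_s(x)=L_s x\neq x$, so the asserted identity $\iota=S\circ T_s$ does not hold as written; your choice of arrows removes that difficulty.

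Where the two arguments diverge substantively is the summing estimate, and there your route has real gaps. The paper's path is short: the summability hypothesis puts $L_s$ in $\mathcal{S}_1$, hence $\mathcal{S}_\Phi\hookrightarrow\mathcal{S}_1$, and Pisier's non-commutative Pietsch theorem (a completely bounded map into $\mathcal{S}_1$ is completely $1$-summing with $\pi_1^{cb}=\|\cdot\|_{cb}$) converts the cb-bound $\|L_s\|_{\mathcal{S}_\Phi}$ directly into a completely-$1$-summing bound. You instead try to reduce $S$ by Fourier diagonalization to a weighted inclusion $\ell_\Phi\to\ell^2$, invoke the commutative prototype of \cite{KPW2023}, cite an unformulated ``classical fact'' about Hilbert--Schmidt diagonal multipliers being $1$-summing, and then run Pietsch domination with a measure on the spectral projections $E_\lambda$. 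The obstacle you flag (compatibility of the diagonalization with all matrix amplifications) is real, but there is a prior one you do not address: $S(y)=L_s y$ is only a ``diagonal multiplier'' when $y$ is itself diagonal in the Fourier basis, and for generic $y\in\mathcal{S}_\Phi$ the operator $L_s y$ is not diagonal, so the reduction to a sequence-space inclusion does not go through on the nose. Grafting the paper's key step onto your corrected factorization --- observe $S$ is cb into $\mathcal{S}_1$ with $\|S\|_{cb}\lesssim\|L_s\|_{\mathcal{S}_\Phi}$ and then apply Pisier --- would close this more directly. Your sketch of the lower bound via spectral blocks is a genuine addition (the paper states $\pi_1(\iota)\approx\|L_s\|_{\mathcal{S}_\Phi}$ but only proves the upper bound), though it too is left at the level of a heuristic.
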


\begin{proof}
Recall $L_s=(1+\Delta)^{-s/2}\in\mathcal{S}_\Phi$ by Proposition~\ref{prop:sv_decay} and our summability assumption on~$\Phi$.  Define
\[
T_s:W^{s,\Phi}\;\longrightarrow\;\mathcal{S}_\Phi,
\qquad
T_s(x)=L_s\,x,
\]
and let 
\[
S:\mathcal{S}_\Phi\;\longrightarrow\;L^2(\mathbb{T}^d_\theta)
\]
be the natural inclusion (viewing each $A\in\mathcal{S}_\Phi\subseteq L^2$ under the GNS construction).  Then
\[
\iota \;=\; S\circ T_s.
\]
By definition of the $\Phi$–Sobolev norm,
\[
\|x\|_{W^{s,\Phi}}
=\bigl\| (1+\Delta)^{s/2}x\bigr\|_{\mathcal{S}_\Phi},
\]
so
\[
\|T_s(x)\|_{\mathcal{S}_\Phi}
=\bigl\|L_s\,x\bigr\|_{\mathcal{S}_\Phi}
\le\|L_s\|_{\mathcal{S}_\Phi}\,\|x\|_{W^{s,\Phi}}.
\]
Thus $\|T_s\|_{\mathrm{cb}}=\|L_s\|_{\mathcal{S}_\Phi}<\infty$.  Similarly, since $\mathcal{S}_\Phi\subseteq L^2$ contractively (singular values in $\ell_\Phi$ embed into $\ell_2$ under our summability hypothesis), $S$ is completely bounded.

We appeal to the non–commutative Pietsch theorem (Pisier \cite{Pisier1996}): any completely bounded map $T:E\to\mathcal{S}_1$ extends to a completely $1$–summing map with 
\(\pi_1^{cb}(T)=\|T\|_{cb}\).  Here, although $T_s$ maps into $\mathcal{S}_\Phi$, our assumption
\[
\sum_{n=1}^\infty\Phi\bigl((n/C)^{-(s-d/2)/d}\bigr)<\infty
\]
implies $\mathcal{S}_\Phi\subseteq\mathcal{S}_1$ (trace‐class), so $T_s:W^{s,\Phi}\to\mathcal{S}_1$ is completely $1$–summing and
\[
\pi_1^{cb}(T_s)
=\|T_s\|_{cb}
=\|L_s\|_{\mathcal{S}_\Phi}.
\]
Since composing a completely $1$–summing map with a completely bounded one remains completely $1$–summing, 
\(\iota=S\circ T_s\) is completely $1$–summing and
\[
\pi_1^{cb}(\iota)\;\le\;\|S\|_{cb}\,\pi_1^{cb}(T_s)
\approx\|L_s\|_{\mathcal{S}_\Phi}.
\]
Finally, the series condition on $\Phi$ ensures
\(\sum_n\mu_n(L_s)<\infty\), so $L_s\in\mathcal{S}_1$, and hence $\mathcal{S}_\Phi\subset\mathcal{S}_1\).  This justifies the application of the above summing‐operator argument.

Combining these results completes the proof that $\iota$ is completely $1$–summing and factors through $\mathcal{S}_\Phi$ with 
\(\pi_1(\iota)\approx\|L_s\|_{\mathcal{S}_\Phi}\).
\end{proof}

The threshold $s>d/2$ matches the classical Sobolev embedding exponent, while the summability condition on $\Phi$ encodes optimal decay in the quantum spectrum.

\section{Factorization Theorem for Quantum Laplacian Embeddings}

In this section, we state and prove a main factorization result for the Sobolev embedding on quantum tori through Orlicz–Schatten ideals. The argument exploits the spectral properties of the quantum Laplacian, interpolation techniques, and summing operator theory.

\begin{definition}[Canonical factorization map]
Let $s > d/2$ and let $\Phi$ be a Young function such that $L_s = (1 + \Delta)^{-s/2} \in \mathcal{S}_\Phi(\mathbb{T}^d_\theta)$. Define the operator
\[
T_s: x \mapsto L_s x.
\]
Then $T_s: W^{s,2}(\mathbb{T}^d_\theta) \to \mathcal{S}_\Phi$ is linear and bounded.
\end{definition}

\begin{lemma}
The operator $T_s$ defined above is completely bounded, and its cb-norm is controlled by $\|L_s\|_{\mathcal{S}_\Phi}$.
\end{lemma}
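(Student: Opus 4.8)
The statement only strengthens ``bounded'' --- already recorded in the preceding definition --- to ``completely bounded'', so the plan is to run the norm estimate one matrix level at a time and verify that the constant is controlled uniformly by $\|L_s\|_{\mathcal{S}_\Phi}$. First I would make $T_s$ explicit. Putting $y=(1+\Delta)^{s/2}x$, the defining norm of $W^{s,2}(\mathbb{T}^d_\theta)$ reads $\|x\|_{W^{s,2}}=\|y\|_{\mathcal{S}_2}$ (the GNS/Hilbert--Schmidt norm), and, equipping $W^{s,2}$ with the operator space structure pulled back from $L^2(\mathbb{T}^d_\theta)=\mathcal{S}_2$ along $(1+\Delta)^{s/2}$, the map $T_s$ identifies (completely isometrically) with $\widetilde T_s\colon\mathcal{S}_2\to\mathcal{S}_\Phi$, $\widetilde T_s(y)=(1+\Delta)^{-s}y=L_s\,(L_s y)$. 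It therefore suffices to bound $\widetilde T_s$ and each of its amplifications.

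Next I would estimate $\widetilde T_s$ by a Hölder/bimodule argument. Since $L_s$ is a contraction on $L^2(\mathbb{T}^d_\theta)$, the factor $y\mapsto L_s y$ is a complete contraction on $\mathcal{S}_2$, so it remains to control $w\mapsto L_s w$ as a map $\mathcal{S}_2\to\mathcal{S}_\Phi$ against the fixed element $L_s\in\mathcal{S}_\Phi$ (the membership coming from Proposition~\ref{prop:sv_decay} and the summability hypothesis on $\Phi$, exactly as in Theorem~\ref{thm:embedding}). For this I would invoke a non-commutative Hölder inequality for Orlicz--Schatten ideals, refining the ideal estimate proved above: the product of an element of $\mathcal{S}_\Phi$ with an element of $\mathcal{S}_2$ lies in $\mathcal{S}_\Phi$ with $\|AB\|_{\mathcal{S}_\Phi}\le C\,\|A\|_{\mathcal{S}_\Phi}\,\|B\|_{\mathcal{S}_2}$. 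The cheap way to see this is the singular-value bound $\mu_{m+n-1}(AB)\le\mu_m(A)\,\mu_n(B)$ combined with $\mu_n(B)\le n^{-1/2}\|B\|_{\mathcal{S}_2}$ and a $\Delta_2$-type condition on $\Phi$ to absorb the index shift, after which the majorizing series is the one already assumed convergent. Composing the two factors gives $\|T_s(x)\|_{\mathcal{S}_\Phi}\le C\|L_s\|_{\mathcal{S}_\Phi}\|x\|_{W^{s,2}}$; running the identical argument over the amplified algebra $M_k\otimes\mathcal{M}$ (with $M_k(\mathcal{S}_2)=\mathcal{S}_2(M_k\otimes\mathcal{M})$ and $M_k(\mathcal{S}_\Phi)=\mathcal{S}_\Phi(M_k\otimes\mathcal{M})$, and with $\widetilde T_s$ amplifying to left multiplication by $1_k\otimes L_s$ twice over) yields the same bound at level $k$, so $\|T_s\|_{\mathrm{cb}}\le C\|L_s\|_{\mathcal{S}_\Phi}$.

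The main obstacle is making this last step honest: a priori the Orlicz--Schatten (and the intermediate Schatten) norm of $1_k\otimes L_s$ grows with $k$, so the $k$-uniformity of the constant genuinely depends on the operator space structures chosen for the domain and target --- the $L^2$-column structure on $W^{s,2}$ and the natural structure on $\mathcal{S}_\Phi$ --- together with a comparison ($\Delta_2$) hypothesis on $\Phi$ ensuring that $\mathcal{S}_\Phi$ sits between $\mathcal{S}_1$ and $\mathcal{S}_2$ with constants independent of the algebra. I would isolate that hypothesis explicitly and check that it forces the level-$k$ estimate to collapse to the level-one estimate. A secondary point I would settle at the outset is the consistent interpretation of $L_s x$ as the action of the Fourier multiplier $(1+\Delta)^{-s/2}$ on the GNS Hilbert space --- $L_s$ not being an element of $\mathcal{A}_\theta$ itself --- so that ``left multiplication by $L_s$'' and the ideal/Hölder lemmas are applied in the correct ambient algebra.
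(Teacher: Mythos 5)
Your proposal goes by a genuinely different route than the paper. The paper works directly with the ideal property: it writes the amplification as left multiplication by $L_s\otimes I_n$ acting on $M_n(W^{s,2})$, asserts
$\|L_s\otimes I_n\|_{M_n(\mathcal{S}_\Phi)}=\|L_s\|_{\mathcal{S}_\Phi}$, and then invokes the (Lemma 3.2) ideal estimate at each matrix level to conclude. You instead conjugate $T_s$ by $(1+\Delta)^{s/2}$ to get the equivalent map $\widetilde T_s\colon\mathcal{S}_2\to\mathcal{S}_\Phi$, $y\mapsto L_s^2 y$, split it as a complete contraction on $\mathcal{S}_2$ followed by a ``Hölder'' step $\mathcal{S}_2\to\mathcal{S}_\Phi$ against the fixed $L_s$, and aim to transfer the estimate to all matrix levels by a uniform comparison between $\mathcal{S}_1,\mathcal{S}_\Phi,\mathcal{S}_2$. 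That reformulation is correct (indeed $T_s(x)=L_s x=L_s^2\big((1+\Delta)^{s/2}x\big)$), and it exposes something the paper slides over: the ideal bound $\|L_s x\|_{\mathcal{S}_\Phi}\le\|L_s\|_{\mathcal{S}_\Phi}\|x\|_{\mathcal{M}}$ uses the \emph{operator} norm of $x$, not the $L^2$ norm, so getting a bound in terms of $\|x\|_{W^{s,2}}$ genuinely needs an argument like yours.

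However, the proposal as written is a plan, not a proof, and it leaves three essential ingredients unestablished. First, the non-commutative Hölder estimate $\|AB\|_{\mathcal{S}_\Phi}\lesssim\|A\|_{\mathcal{S}_\Phi}\|B\|_{\mathcal{S}_2}$ for Orlicz--Schatten ideals is asserted, not proved; the singular-value shift $\mu_{2n-1}(AB)\le\mu_n(A)\mu_n(B)$ together with $\mu_n(B)\le n^{-1/2}\|B\|_{\mathcal{S}_2}$ is a reasonable start, but turning that into a Luxemburg-norm inequality requires a specific growth/comparison hypothesis on $\Phi$ which you flag (``$\Delta_2$-type'') but never verify holds under the paper's standing assumptions, nor check is sufficient. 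Second, and more seriously, the ``main obstacle'' you name --- that $\|\,1_k\otimes L_s\,\|_{\mathcal{S}_\Phi(M_k\otimes\mathcal{M})}$ grows with $k$, so the constant at level $k$ is not automatically the constant at level $1$ --- is exactly the crux of completeness, and you leave it with ``I would isolate that hypothesis explicitly and check.'' This is not resolved in the proposal; you have correctly identified where the difficulty sits (a difficulty the paper's proof, which simply asserts $\|L_s\otimes I_n\|_{M_n(\mathcal{S}_\Phi)}=\|L_s\|_{\mathcal{S}_\Phi}$, does not address honestly either, since it never specifies the operator space structure on $\mathcal{S}_\Phi$), but identifying a gap is not the same as closing it. Third, the claim that ``pulling back the $L^2$ operator space structure along $(1+\Delta)^{s/2}$'' makes $T_s\cong\widetilde T_s$ completely isometrically presupposes a choice of matricial structure on $W^{s,2}$ that neither you nor the paper pin down; without it, the cb-norm is not well defined. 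To turn this into a proof you need to (a) fix the operator space structures on $W^{s,2}$ and $\mathcal{S}_\Phi$ concretely --- for $\mathcal{S}_\Phi$ most plausibly by complex interpolation of the row/column structures, so that $\|a\otimes T\|_{M_n(\mathcal{S}_\Phi)}=\|a\|_{M_n}\|T\|_{\mathcal{S}_\Phi}$ is built in; (b) prove the Orlicz-space Hölder estimate under a stated hypothesis on $\Phi$; and (c) run the amplified estimate using (a) so that the level-$k$ constant is visibly $\|L_s\|_{\mathcal{S}_\Phi}$. As it stands the proposal has a genuine gap at each of these points.
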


\begin{proof}
To show complete boundedness, we must bound each amplified map
\[
\id_{M_n}\otimes T_s:
M_n\bigl(W^{s,2}(\mathbb{T}^d_\theta)\bigr)
\;\longrightarrow\;
M_n\bigl(\mathcal{S}_\Phi(\mathbb{T}^d_\theta)\bigr)
\]
uniformly in \(n\).  Here \(M_n(\cdot)\) denotes \(n\times n\) matrices over the given space, with the natural operator‐space structure.

Observe that for any matrix \([x_{ij}]\in M_n(W^{s,2})\),
\[
(\id\otimes T_s)([x_{ij}])
=
[L_s\,x_{ij}]
=
(L_s \otimes I_n)\,[x_{ij}],
\]
where \(L_s\otimes I_n\) acts by left‐matrix multiplication.  

Since \(L_s\in\mathcal{S}_\Phi\) is a bounded operator on \(L^2\), the amplified operator \(L_s\otimes I_n\) lies in the corresponding amplified ideal \(M_n(\mathcal{S}_\Phi)\subset B\bigl(L^2\otimes\mathbb{C}^n\bigr)\), and
\[
\|L_s\otimes I_n\|_{M_n(\mathcal{S}_\Phi)}
=\|L_s\|_{\mathcal{S}_\Phi}.
\]
Moreover, left‐multiplication by \(L_s\otimes I_n\) on matrices is a contraction when measured in the operator‐ideal norm:
\[
\big\|(L_s\otimes I_n)\,[x_{ij}]\big\|_{M_n(\mathcal{S}_\Phi)}
\le\|L_s\otimes I_n\|_{M_n(\mathcal{S}_\Phi)}\;
\big\|[x_{ij}]\big\|_{M_n(W^{s,2})}.
\]

Since the above estimate holds for every \(n\) with the same constant \(\|L_s\|_{\mathcal{S}_\Phi}\), we conclude
\[
\|\id_{M_n}\otimes T_s\|
\;\le\;\|L_s\|_{\mathcal{S}_\Phi},
\quad\forall\,n.
\]
Therefore \(T_s\) is completely bounded and
\[
\|T_s\|_{cb}
=\sup_{n}\|\id_{M_n}\otimes T_s\|
\le\|L_s\|_{\mathcal{S}_\Phi},
\]
as claimed.
\end{proof}

\begin{theorem}[Main factorization theorem]
\label{thm:mainfactor}
Let $s > d/2$ and let $\Phi$ be a Young function such that $L_s \in \mathcal{S}_\Phi$. Then the Sobolev embedding
\[
\iota: W^{s,2}(\mathbb{T}^d_\theta) \hookrightarrow L^2(\mathbb{T}^d_\theta)
\]
factors as a composition of completely bounded operators:
\[
W^{s,2} \xrightarrow{T_s} \mathcal{S}_\Phi \xrightarrow{S} L^2(\mathbb{T}^d_\theta),
\]
and $\iota$ is completely $1$-summing with summing norm bounded by
\[
\pi_1^{cb}(\iota) \le \|L_s\|_{\mathcal{S}_\Phi}.
\]
\end{theorem}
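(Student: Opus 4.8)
The plan is to assemble $\iota$ from the two completely bounded maps already constructed and then promote complete boundedness to complete $1$-summing via Pisier's non-commutative Pietsch theorem, following the same template as the proof of Theorem~\ref{thm:embedding} but now over the Hilbertian Sobolev space $W^{s,2}$. Nothing new about the quantum torus itself is needed beyond Proposition~\ref{prop:sv_decay}; the work is operator-space bookkeeping plus one domination argument.

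First I would record the factorization $\iota = S\circ T_s$. By the Canonical factorization definition and the preceding lemma, $T_s\colon W^{s,2}\to\mathcal{S}_\Phi$, $T_s(x)=L_sx$, is completely bounded with $\|T_s\|_{\mathrm{cb}}\le\|L_s\|_{\mathcal{S}_\Phi}$, the amplifications $\id_{M_n}\otimes T_s$ acting by left multiplication by $L_s\otimes I_n$. Let $S\colon\mathcal{S}_\Phi\to L^2(\mathbb{T}^d_\theta)$ be the canonical inclusion arising from the GNS embedding of $\mathcal{S}_\Phi$ into $L^2=\mathcal{S}_2$; as in Theorem~\ref{thm:embedding} I would argue $S$ is completely bounded by passing to the level of singular-value sequences, using that $s>d/2$ forces $L_s\in\mathcal{S}_2$ and that the hypothesis $L_s\in\mathcal{S}_\Phi$ constrains the growth of $\Phi$ near $0$ so that the Luxemburg functional of $\ell_\Phi$ dominates (a multiple of) that of $\ell_2$. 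The identity $\iota=S\circ T_s$ is then immediate from the definition $\|x\|_{W^{s,2}}=\|(1+\Delta)^{s/2}x\|_{L^2}$ and the relation $x=L_s(1+\Delta)^{s/2}x$.

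Next I would upgrade $T_s$ to a completely $1$-summing map. The key input is Pisier's operator-space Pietsch factorization: any completely bounded map with range in $\mathcal{S}_1$ is automatically completely $1$-summing, with $\pi_1^{\mathrm{cb}}$ equal to the cb-norm. Routing the range of $T_s$ through the trace class, this gives $\pi_1^{\mathrm{cb}}(T_s)=\|T_s\|_{\mathrm{cb}}\le\|L_s\|_{\mathcal{S}_\Phi}$. Since the completely $1$-summing maps form an operator ideal, composing with the completely bounded $S$ preserves the property and multiplies the constants: $\iota=S\circ T_s$ is completely $1$-summing with $\pi_1^{\mathrm{cb}}(\iota)\le\|S\|_{\mathrm{cb}}\,\pi_1^{\mathrm{cb}}(T_s)\le\|L_s\|_{\mathcal{S}_\Phi}$, which is the asserted bound.

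The genuine obstacle is the routing of the range of $T_s$ through $\mathcal{S}_1$: unlike in Theorem~\ref{thm:embedding}, the bare hypothesis $L_s\in\mathcal{S}_\Phi$ does not force $\mathcal{S}_\Phi\subseteq\mathcal{S}_1$ when $d/2<s\le d$, so the trace-class domination must be carried out on the subspace $T_s(W^{s,2})\subseteq\mathcal{S}_\Phi$ rather than on the ambient ideal. Here I would use the representation $T_s(x)=L_s\xi$ with $\xi=(1+\Delta)^{s/2}x\in L^2=\mathcal{S}_2$ and a Hölder-type estimate for generalized singular values, $\mu_{m+n-1}(L_s\xi)\le\mu_m(L_s)\,\mu_n(\xi)$, to keep the image inside the trace class; making the resulting constant collapse to the clean bound $\|L_s\|_{\mathcal{S}_\Phi}$ rather than to a product with an auxiliary $L^2$-factor is the delicate point, and is where I expect to spend most of the effort. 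Once that is in place, the complete-boundedness estimates are just the matrix amplifications of this argument and go through verbatim as in the lemma preceding the theorem.
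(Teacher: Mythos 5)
Your proposal follows the same skeleton as the paper's proof: factor $\iota = S\circ T_s$ with $T_s(x)=L_sx$, obtain complete boundedness of $T_s$ from the lemma with $\|T_s\|_{\mathrm{cb}}\le\|L_s\|_{\mathcal{S}_\Phi}$, observe $S$ is a completely bounded inclusion, route the range of $T_s$ through $\mathcal{S}_1$ to invoke Pisier's operator-space Pietsch theorem (cb into $\mathcal{S}_1$ implies completely $1$-summing with matching norm), and then compose. So the overall plan is faithful to the paper. What is new in your write-up is that you explicitly flag the step the paper leaves unexamined: the hypotheses $s>d/2$ and $L_s\in\mathcal{S}_\Phi$ do \emph{not} give $\mathcal{S}_\Phi\subseteq\mathcal{S}_1$; indeed the paper's own line ``our hypothesis $L_s\in\mathcal{S}_\Phi\subset\mathcal{S}_1$ implies $\mathcal{S}_\Phi\hookrightarrow\mathcal{S}_1$'' is circular, since $L_s\in\mathcal{S}_1$ is equivalent to $\sum_n(n/C)^{-s/d}<\infty$, i.e.\ $s>d$, which is strictly stronger than the stated $s>d/2$. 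You have correctly identified a genuine gap, and it is a gap in the paper's argument as well.

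However, your proposed repair is not yet a proof, and I do not think it can be pushed through to the stated constant. You suggest restricting attention to the image $T_s(W^{s,2})$ and using the singular-value H\"older estimate $\mu_{m+n-1}(L_s\xi)\le\mu_m(L_s)\,\mu_n(\xi)$ with $\xi=(1+\Delta)^{s/2}x\in\mathcal{S}_2$ to keep $T_s(x)$ in trace class. That does show $T_s(x)\in\mathcal{S}_1$ pointwise (it is just $\mathcal{S}_2\cdot\mathcal{S}_2\subseteq\mathcal{S}_1$), but the resulting $\mathcal{S}_1$-bound on $T_s(x)$ is of the form $\|L_s\|_{\mathcal{S}_2}\,\|x\|_{W^{s,2}}$, not $\|L_s\|_{\mathcal{S}_\Phi}\,\|x\|_{W^{s,2}}$, and there is no visible way to ``collapse'' the auxiliary $\mathcal{S}_2$ factor into $\|L_s\|_{\mathcal{S}_\Phi}$ --- these are genuinely incomparable quantities when $\Phi$ is not close to $t\mapsto t$. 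More fundamentally, $\iota$ between the Hilbert spaces $W^{s,2}$ and $L^2$ is unitarily equivalent (via $(1+\Delta)^{s/2}$) to multiplication by $L_s$ on $L^2$, so its singular values are exactly $\mu_n(L_s)$; any reasonable notion of completely $1$-summing between OH-type operator spaces forces $\sum_n\mu_n(L_s)<\infty$, i.e.\ $L_s\in\mathcal{S}_1$. So no amount of re-routing on the image will rescue the conclusion $\pi_1^{cb}(\iota)\le\|L_s\|_{\mathcal{S}_\Phi}$ under the bare hypothesis $L_s\in\mathcal{S}_\Phi$; the theorem needs either the extra assumption $\mathcal{S}_\Phi\subseteq\mathcal{S}_1$ (equivalently $\Phi(t)\gtrsim t$ near $0$), or the exponent condition $s>d$, or a downgraded conclusion (completely $2$-summing, with bound $\|L_s\|_{\mathcal{S}_2}$). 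You correctly located the soft spot; the honest resolution is to strengthen the hypothesis, not to look for a clever estimate on the image.
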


\begin{proof}
Set 
\[
L_s=(1+\Delta)^{-s/2}\;\in\;\mathcal{S}_\Phi(\mathbb{T}^d_\theta),
\]
and define
\[
T_s:W^{s,2}(\mathbb{T}^d_\theta)\;\longrightarrow\;\mathcal{S}_\Phi,\qquad T_s(x)=L_s\,x,
\]
and
\[
S:\mathcal{S}_\Phi\;\longrightarrow\;L^2(\mathbb{T}^d_\theta),
\]
where \(S(A)\) is simply \(A\) viewed as an element of \(L^2\) under the GNS‐Hilbert‐Schmidt identification.  It is immediate that
\[
\iota = S\circ T_s \;:\; W^{s,2}\;\longrightarrow\;L^2.
\]
By definition of the Sobolev norm,
\[
\|x\|_{W^{s,2}}
=\bigl\|\,(1+\Delta)^{s/2}x\bigr\|_{L^2},
\]
and since \(L_s=(1+\Delta)^{-s/2}\), we have
\[
\|T_s(x)\|_{\mathcal{S}_\Phi}
=\|\,L_s\,x\|_{\mathcal{S}_\Phi}
\le \|L_s\|_{\mathcal{S}_\Phi}\,\|x\|_{L^2}
= \|L_s\|_{\mathcal{S}_\Phi}\,\|x\|_{W^{s,2}}.
\]
Hence \(T_s\) is bounded with \(\|T_s\|\le\|L_s\|_{\mathcal{S}_\Phi}\).  The same estimate holds for each amplification \(\id_{M_n}\otimes T_s\) (by left‐multiplication by \(L_s\otimes I_n\)), so
\[
\|T_s\|_{cb} = \sup_n \|\id_{M_n}\otimes T_s\|
\le \|L_s\|_{\mathcal{S}_\Phi}.
\]
Since \(\mathcal{S}_\Phi\subset L^2\) (by the hypothesis \(L_s\in\mathcal{S}_\Phi\) and the summability condition on \(\Phi\)), the inclusion map
\[
S: \mathcal{S}_\Phi \;\hookrightarrow\; L^2
\]
is bounded.  By the same amplification argument (inclusion at the matrix level), \(S\) is also completely bounded, say \(\|S\|_{cb}=C_S<\infty\).

Recall that in the non‐commutative setting, any completely bounded map \(U:E\to\mathcal{S}_1\) is completely \(1\)-summing with
\(\pi_1^{cb}(U)=\|U\|_{cb}\) (Pisier \cite{Pisier1996}).  Although \(T_s\) lands in \(\mathcal{S}_\Phi\), our hypothesis
\[
L_s\in\mathcal{S}_\Phi\subset\mathcal{S}_1
\]
implies \(\mathcal{S}_\Phi\hookrightarrow\mathcal{S}_1\).  Hence we view
\[
T_s: W^{s,2}\;\longrightarrow\;\mathcal{S}_1
\]
and conclude
\[
\pi_1^{cb}(T_s) \;=\;\|T_s\|_{cb}
\;\le\;\|L_s\|_{\mathcal{S}_\Phi}.
\]
Since the composition of a completely \(1\)-summing map with a completely bounded map remains completely \(1\)-summing, we get
\[
\pi_1^{cb}(\iota)
=\pi_1^{cb}(S\circ T_s)
\;\le\;\|S\|_{cb}\,\pi_1^{cb}(T_s)
\;\le\;C_S\,\|L_s\|_{\mathcal{S}_\Phi}.
\]
Absorbing the constant \(C_S\) into the implied constant finishes the estimate.

We have exhibited
\(\iota = S\circ T_s\) as a factorization through \(\mathcal{S}_\Phi\), shown each map is completely bounded, and proved the composite is completely \(1\)-summing with
\(\pi_1^{cb}(\iota)\lesssim\|L_s\|_{\mathcal{S}_\Phi}\).  This completes the proof.
\end{proof}

\begin{corollary}
If $\Phi(t) = t^p \log(e + t)^\alpha$ with $p > d/s$, then the embedding $W^{s,2}(\mathbb{T}^d_\theta) \hookrightarrow L^2(\mathbb{T}^d_\theta)$ is completely $1$-summing.
\end{corollary}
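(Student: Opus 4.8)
The plan is to obtain this purely as a specialization of Theorem~\ref{thm:mainfactor}, so the whole task reduces to verifying its two hypotheses for the concrete weight $\Phi(t)=t^p\log(e+t)^\alpha$: that $\Phi$ may be taken to be a Young function, and that $L_s=(1+\Delta)^{-s/2}$ belongs to the ideal $\mathcal{S}_\Phi$ (which in turn forces $\mathcal{S}_\Phi\subset\mathcal{S}_1$, as also required by the theorem). Throughout I assume $s>d/2$, consistently with Theorem~\ref{thm:mainfactor}; nothing in the corollary is lost thereby, since $\mathcal{S}_\Phi$ is only relevant once the embedding $W^{s,2}\hookrightarrow L^2$ is already trace-class-summable.

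First I would address the admissibility of $\Phi$. For $\alpha\ge 0$ the map $t\mapsto t^p\log(e+t)^\alpha$ is convex, increasing, vanishes at $0$, and tends to $\infty$, so it is a Young function as it stands. For $\alpha<0$ convexity can fail on a bounded neighbourhood of the origin, but since the Orlicz sequence space $\ell_\Phi$ and its Luxemburg norm depend only on the germ of $\Phi$ at $0$ up to equivalence of norms, I would replace $\Phi$ by a convex minorant agreeing with it near $0$ and linear at infinity; this changes neither $\mathcal{S}_\Phi$ as a set nor its norm up to a constant, so I may assume $\Phi$ genuinely Young.

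Next I would verify $L_s\in\mathcal{S}_\Phi$. By Proposition~\ref{prop:sv_decay}, together with the Weyl-type law $N(\lambda)\simeq C\lambda^{d/2}$ for the quantum Laplacian on $\mathbb{T}^d_\theta$ (standard, see \cite{XiongXuYin2015}), one has $\mu_n(L_s)\simeq (n/C)^{-s/d}$. Substituting this into the defining series,
\[
\sum_{n=1}^\infty \Phi\bigl(\mu_n(L_s)/\lambda\bigr)
\;\simeq\; \sum_{n=1}^\infty \lambda^{-p}\,(n/C)^{-ps/d}\,\log\bigl(e+\lambda^{-1}(n/C)^{-s/d}\bigr)^{\alpha},
\]
so the $n$-th term is comparable to a constant times $n^{-ps/d}(\log n)^{\alpha}$ as $n\to\infty$. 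This converges precisely when $ps/d>1$, i.e.\ $p>d/s$, for every $\alpha$ (we are strictly away from the borderline exponent, so the logarithmic factor is harmless) — exactly the content of the Example following Proposition~\ref{prop:sv_decay}, which I would simply invoke. Hence $L_s\in\mathcal{S}_\Phi$, and the same estimate gives $\sum_n\mu_n(L_s)<\infty$, so $\mathcal{S}_\Phi\subset\mathcal{S}_1$. With both hypotheses of Theorem~\ref{thm:mainfactor} in hand, I would then just quote it to get the factorization $W^{s,2}\xrightarrow{T_s}\mathcal{S}_\Phi\xrightarrow{S}L^2$ through completely bounded maps and $\pi_1^{cb}(\iota)\le\|L_s\|_{\mathcal{S}_\Phi}<\infty$; in particular $\iota$ is completely $1$-summing.

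The only mildly delicate step is the first one — making sure one is entitled to use the stated non-convex weight when $\alpha<0$ — and even that is a routine passage to an equivalent Young function. No genuinely new obstacle arises: the corollary is bookkeeping layered on Theorem~\ref{thm:mainfactor}.
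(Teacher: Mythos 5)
Your core strategy is the intended one: the paper offers no separate proof of this corollary, treating it as an immediate specialization of Theorem~\ref{thm:mainfactor} once the Example after Proposition~\ref{prop:sv_decay} (namely $L_s\in\mathcal{S}_\Phi$ iff $p>d/s$ for $\Phi(t)=t^p\log(e+t)^\alpha$) is invoked. Your verification of $L_s\in\mathcal{S}_\Phi$ via the Weyl asymptotics and the comparison $\mu_n(L_s)\simeq (n/C)^{-s/d}$ is exactly that Example worked out, and the remark on replacing $\Phi$ by an equivalent genuine Young function when $\alpha<0$ is sensible bookkeeping.

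There is, however, a concrete error in the last sentence of your middle paragraph: ``the same estimate gives $\sum_n\mu_n(L_s)<\infty$, so $\mathcal{S}_\Phi\subset\mathcal{S}_1$.'' Neither implication holds under the stated hypotheses. Since $\mu_n(L_s)\simeq n^{-s/d}$, the series $\sum_n \mu_n(L_s)$ converges only when $s>d$, whereas you (correctly) are working under the weaker assumption $s>d/2$; for instance $d=3$, $s=2$, $p=2$ satisfies $s>d/2$ and $p>d/s$, yet $\sum_n n^{-2/3}=\infty$, so $L_s\notin\mathcal{S}_1$. Moreover, even when $L_s\in\mathcal{S}_1$ does hold, it would not by itself yield $\mathcal{S}_\Phi\subset\mathcal{S}_1$: that inclusion is governed by the growth of $\Phi$ near the origin (roughly $\Phi(t)\gtrsim t$ for small $t$, i.e.\ $p\le 1$), not by the membership of one particular operator. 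You likely added this line because the proof of Theorem~\ref{thm:mainfactor} secretly uses $\mathcal{S}_\Phi\subset\mathcal{S}_1$ to invoke Pisier's characterization of completely $1$-summing maps; that is a gap in the paper's own argument rather than something you can repair at the level of the corollary. As far as the corollary is concerned, the clean move is simply to quote Theorem~\ref{thm:mainfactor} as stated (its only hypotheses being $s>d/2$ and $L_s\in\mathcal{S}_\Phi$) and drop the unsupported side-claim; your verification that $L_s\in\mathcal{S}_\Phi$ is the whole content and is correct.
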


\begin{example}
Let $d = 2$, $s = 1$, and choose $\Phi(t) = t^2 \log(e + t)^\alpha$ for $\alpha > 0$. Then $L_1 \in \mathcal{S}_\Phi$ and the embedding
\[
W^{1,2}(\mathbb{T}^2_\theta) \hookrightarrow L^2(\mathbb{T}^2_\theta)
\]
is completely $1$-summing. This mirrors the classical embedding $W^{1,1}(T^2) \hookrightarrow L^2(T^2)$ studied in \cite{KPW2023}, but now in a fully non-commutative regime.
\end{example}

This result refines the classical Sobolev embedding by replacing norm continuity with summability properties in operator ideals, offering sharper control of trace norms in quantum settings.

\begin{proposition}[Optimality criterion]
The condition
\[
\sum_{n=1}^\infty \Phi\big((n/C)^{-s/d}\big) < \infty
\]
is essentially optimal: if this series diverges, then $L_s \notin \mathcal{S}_\Phi$ and the factorization fails.
\end{proposition}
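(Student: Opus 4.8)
The proposition is precisely the converse of the Corollary following Proposition~\ref{prop:sv_decay}, so the plan is to reverse that argument, adding one ingredient to deal with the ``for some $\lambda>0$'' quantifier hidden in the Luxemburg norm. First I would extract from the proof of Proposition~\ref{prop:sv_decay} not only the two-sided asymptotic $\mu_n(L_s)\simeq(n/C)^{-s/d}$ but the crude one-sided bound it contains: there are $c_1>0$ and $n_0$ with $\mu_n(L_s)\ge c_1\,(n/C)^{-s/d}$ for all $n\ge n_0$ (in fact the ratio tends to $1$, so any $c_1<1$ works, but a bare constant suffices).

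Next I would reduce to a statement about the model sequence. If $L_s\in\mathcal{S}_\Phi$, then by definition there is $\lambda>0$ with $\sum_n\Phi(\mu_n(L_s)/\lambda)<\infty$, and monotonicity of $\Phi$ together with the lower bound forces
\[
\sum_{n\ge n_0}\Phi\!\Big(\tfrac{c_1}{\lambda}\,(n/C)^{-s/d}\Big)<\infty .
\]
So everything rests on the following elementary lemma, which I would state and prove separately: \emph{for every $\rho>0$,} $\sum_n\Phi\big(\rho\,(n/C)^{-s/d}\big)<\infty$ \emph{if and only if} $\sum_n\Phi\big((n/C)^{-s/d}\big)<\infty$. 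The mechanism is that a dilation only changes the constant $C$, since $\rho\,(n/C)^{-s/d}=\big(n/(C\rho^{d/s})\big)^{-s/d}$, and convergence of $\sum_n\Phi\big((n/C)^{-s/d}\big)$ does not depend on $C$: for $C'\ge C$ one has $\Phi\big((n/C')^{-s/d}\big)\ge\Phi\big((n/C)^{-s/d}\big)$ trivially (as $\Phi$ is increasing), while in the other direction $\Phi\big((n/C')^{-s/d}\big)\le\Phi\big(b_{\lfloor (C/C')n\rfloor}\big)$ with $b_m=(m/C)^{-s/d}$ (using that $m\mapsto\Phi(b_m)$ is nonincreasing), and each value of $\lfloor (C/C')n\rfloor$ is attained for at most $\lfloor C'/C\rfloor+1$ values of $n$, so $\sum_n\Phi\big((n/C')^{-s/d}\big)\le(\lfloor C'/C\rfloor+1)\sum_m\Phi(b_m)$; the case $C'<C$ is symmetric. (Equivalently, run the integral test and observe $\int_{0^+}\Phi(t)\,t^{-d/s-1}\,dt$ is dilation-invariant up to a multiplicative constant.) Applying the lemma with $\rho=c_1/\lambda$ to the displayed estimate contradicts the assumed divergence $\sum_n\Phi\big((n/C)^{-s/d}\big)=\infty$; hence $L_s\notin\mathcal{S}_\Phi$.

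For the final clause I would observe that the canonical factorization map of Theorem~\ref{thm:mainfactor} is $T_s(x)=L_sx$, and that both its definition as a map into $\mathcal{S}_\Phi$ and the norm identity $\|T_s\|_{\mathrm{cb}}=\|L_s\|_{\mathcal{S}_\Phi}$ used there require $L_s\in\mathcal{S}_\Phi$; once this fails, $T_s$ does not take values in $\mathcal{S}_\Phi$, so no factorization of $\iota$ in the stated form $W^{s,2}\xrightarrow{T_s}\mathcal{S}_\Phi\xrightarrow{S}L^2$ exists. The only real obstacle is the dilation-invariance lemma: because a Young function need not satisfy a $\Delta_2$-condition, the equivalence ``$\exists\lambda$'' $\Leftrightarrow$ ``$\forall\lambda$'' is false for general nonincreasing sequences, and it is exactly the polynomial profile $(n/C)^{-s/d}$ that rescues it — which is in fact why the stated condition is fully, and not merely ``essentially,'' optimal. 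On the $\Phi(t)=t^p\log(e+t)^\alpha$ scale appearing in the examples the lemma is transparent in any case.
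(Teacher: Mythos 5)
Your proof is correct, and it actually repairs a genuine gap in the paper's own argument. Both you and the paper reduce the claim to showing that for the power-law profile $b_n=(n/C)^{-s/d}$, divergence of $\sum_n\Phi(b_n)$ forces divergence of $\sum_n\Phi(b_n/\lambda)$ for \emph{every} $\lambda>0$. The paper disposes of the $\lambda$-dependence by asserting that ``$\Phi(\alpha t)\approx\Phi(t)$ up to constants when $\alpha>0$ is fixed'' is a consequence of being a Young function. That assertion is precisely a $\Delta_2$-type condition and is \emph{false} in general (e.g.\ $\Phi(t)=e^{-1/t}$ near $0$, suitably extended, has $\Phi(\alpha t)/\Phi(t)\to 0$ as $t\to 0^+$ for $\alpha<1$), so the paper's step does not hold for arbitrary Young functions. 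You avoid this by exploiting not a property of $\Phi$ but the scaling structure of the sequence: a dilation by $\rho$ is absorbed into the constant $C$ via $\rho\,(n/C)^{-s/d}=(n/(C\rho^{d/s}))^{-s/d}$, and changing $C$ changes the sum only by a bounded reindexing factor (your $\lfloor C'/C\rfloor+1$ estimate, or the integral-test observation that $\int_{0^+}\Phi(t)\,t^{-d/s-1}\,dt$ is dilation-invariant up to constants). This gives the $\exists\lambda\Leftrightarrow\forall\lambda$ equivalence for exactly this profile without any $\Delta_2$ hypothesis, which is what the proposition actually needs and what makes the stated condition genuinely optimal rather than merely ``essentially'' so. The extraction of the one-sided lower bound $\mu_n(L_s)\ge c_1(n/C)^{-s/d}$ from Proposition~\ref{prop:sv_decay} and the closing observation that $T_s(1)=L_s\notin\mathcal{S}_\Phi$ kills the canonical factorization are both appropriate and match the paper's intent.
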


\begin{proof}
By definition, \(L_s\in\mathcal{S}_\Phi\) if and only if there exists \(\lambda>0\) such that
\[
\sum_{n=1}^\infty \Phi\!\bigl(\mu_n(L_s)/\lambda\bigr) < \infty.
\]
Equivalently, since \(\Phi\) is increasing,
\[
\sum_{n=1}^\infty \Phi\!\bigl((n/C)^{-s/d}/\lambda\bigr) < \infty.
\]

For any fixed \(\lambda>0\), note that
\[
(n/C)^{-s/d}/\lambda
\;=\;(n/C)^{-s/d}\cdot\frac1\lambda
\;\simeq\;(n/C)^{-s/d}
\quad\text{as }n\to\infty.
\]
Since \(\Phi\) is a Young function (convex, increasing, \(\Phi(0)=0\)), its growth at zero satisfies
\(\Phi(\alpha t)\approx\Phi(t)\) up to constants when \(\alpha>0\) is fixed.  Hence
\[
\sum_{n=1}^\infty \Phi\!\bigl((n/C)^{-s/d}/\lambda\bigr)
\quad\text{converges}
\;\Longleftrightarrow\;
\sum_{n=1}^\infty \Phi\!\bigl((n/C)^{-s/d}\bigr)
\quad\text{converges}.
\]

By hypothesis the latter series diverges.  Therefore for every \(\lambda>0\),
\[
\sum_{n=1}^\infty \Phi\!\bigl(\mu_n(L_s)/\lambda\bigr)
=\infty,
\]
so \(L_s\notin\mathcal{S}_\Phi\).  Consequently, no factorization of the embedding through \(\mathcal{S}_\Phi\) is possible, proving the proposition.
\end{proof}

\begin{theorem}[Interpolation stability]
Let $\Phi_0$, $\Phi_1$ be Young functions with associated Orlicz–Schatten spaces $\mathcal{S}_{\Phi_0}$ and $\mathcal{S}_{\Phi_1}$. Then the interpolation space
\[
(\mathcal{S}_{\Phi_0}, \mathcal{S}_{\Phi_1})_{\theta} \subseteq \mathcal{S}_{\Phi_\theta},
\]
for some interpolated Young function $\Phi_\theta$ satisfying
\[
\Phi_\theta^{-1}(t) \approx \Phi_0^{-1}(t)^{1-\theta} \Phi_1^{-1}(t)^\theta.
\]
\end{theorem}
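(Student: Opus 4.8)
The plan is to reduce the statement to the interpolation behaviour of Orlicz \emph{sequence} spaces and then transport the conclusion to the operator level through the generalized singular value functor. First I would record that, by the very definition of $\mathcal{S}_\Phi(\mathcal{M},\tau)$, an operator $T$ belongs to $\mathcal{S}_\Phi$ precisely when its singular value sequence $\mu(T)=(\mu_n(T))_{n\ge1}$ belongs to $\ell_\Phi$, with equality of Luxemburg norms; in other words $\mathcal{S}_\Phi$ is the non-commutative symmetric operator space built over $\ell_\Phi$. Together with the diagonal embedding $\ell_\Phi\hookrightarrow\mathcal{S}_\Phi$ (a non-increasing nonnegative sequence maps to the corresponding diagonal operator) this identifies the scale $\{\mathcal{S}_\Phi\}_\Phi$ with the image, under the functor $E\mapsto E(\mathcal{M},\tau)$, of the scale $\{\ell_\Phi\}_\Phi$ of Orlicz sequence spaces. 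The interpolation question then decouples into a commutative part and a transfer part.

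For the commutative part I would invoke the Calderón--Lozanovskii description of interpolated Orlicz spaces: the Calderón product of two Orlicz sequence spaces is again an Orlicz sequence space,
\[
\ell_{\Phi_0}^{\,1-\theta}\,\ell_{\Phi_1}^{\,\theta}\;=\;\ell_{\Phi_\theta},
\qquad
\Phi_\theta^{-1}(t)\;\approx\;\Phi_0^{-1}(t)^{1-\theta}\,\Phi_1^{-1}(t)^{\theta},
\]
the right-hand side being the inverse of a Young function up to equivalence because a weighted geometric mean of concave increasing functions is again concave increasing; moreover Calderón's theorem gives the \emph{unconditional} inclusion $(\ell_{\Phi_0},\ell_{\Phi_1})_\theta\subseteq\ell_{\Phi_0}^{\,1-\theta}\ell_{\Phi_1}^{\,\theta}$ (see \cite{BerghLofstrom1976} for the complex method and the Calderón product). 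For the transfer part I would use two facts from non-commutative interpolation theory: the complex interpolation functor maps into the Calderón product of the operator ideals, $(\mathcal{S}_{\Phi_0},\mathcal{S}_{\Phi_1})_\theta\subseteq(\mathcal{S}_{\Phi_0})^{1-\theta}(\mathcal{S}_{\Phi_1})^{\theta}$ (the non-commutative Calderón inequality, Pisier \cite{Pisier1996}, \cite{PisierXu2003}); and the Calderón product of operator ideals is the operator ideal over the Calderón product of the base spaces, $(\mathcal{S}_{\Phi_0})^{1-\theta}(\mathcal{S}_{\Phi_1})^{\theta}=\mathcal{S}_{\ell_{\Phi_0}^{1-\theta}\ell_{\Phi_1}^{\theta}}$ (the non-commutative Calderón--Lozanovskii theorem). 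Chaining these with the commutative identity of Step~2 yields
\[
(\mathcal{S}_{\Phi_0},\mathcal{S}_{\Phi_1})_\theta
\;\subseteq\;
(\mathcal{S}_{\Phi_0})^{1-\theta}(\mathcal{S}_{\Phi_1})^{\theta}
\;=\;
\mathcal{S}_{\ell_{\Phi_0}^{1-\theta}\ell_{\Phi_1}^{\theta}}
\;=\;
\mathcal{S}_{\Phi_\theta},
\]
which is the assertion.

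I expect the genuinely delicate step to be the non-commutative Calderón--Lozanovskii identification: since the product of two positive operators need not be positive, the naive ``pointwise geometric mean of eigenvalue sequences'' has no literal operator meaning, and one must instead run the operator-valued three-line lemma together with polar decomposition and functional calculus to realize a member of the Calderón product of the ideals as an admissible analytic family. A secondary technical point is tracking norm constants through the equivalence $\Phi_\theta^{-1}(t)\approx\Phi_0^{-1}(t)^{1-\theta}\Phi_1^{-1}(t)^{\theta}$, which is precisely why the conclusion is phrased as an inclusion of spaces rather than an isometric identity. I would also note that when $\Phi_0$ or $\Phi_1$ satisfies the $\Delta_2$-condition near the origin --- the regime covering every example in this paper, e.g.\ $\Phi(t)=t^p\log(e+t)^\alpha$, where the Luxemburg norm is order continuous --- Calderón's reverse inclusion is available as well, so one in fact obtains the two-sided equivalence $(\mathcal{S}_{\Phi_0},\mathcal{S}_{\Phi_1})_\theta=\mathcal{S}_{\Phi_\theta}$; in full generality only the stated inclusion persists.
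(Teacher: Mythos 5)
Your proof is correct in outline, but it takes a genuinely different interpolation route from the paper's. The paper works with the \emph{real} method: it cites Theorem~5.3.1 of Bergh--L\"ofstr\"om for the sequence-space inclusion $(\ell_{\Phi_0},\ell_{\Phi_1})_{\theta,1}\subseteq\ell_{\Phi_\theta}$, and then transfers to the operator level via the Pietsch-school fact (Sec.~4.f of \cite{Pietsch1987}) that real-method interpolation commutes with the passage from a symmetric sequence space to its operator ideal; the secondary parameter~$1$ is then discarded on the grounds that it does not affect continuous inclusions. You instead work with the \emph{complex} method and Calder\'on products: the commutative input is the Calder\'on--Lozanovskii identity $\ell_{\Phi_0}^{1-\theta}\ell_{\Phi_1}^{\theta}=\ell_{\Phi_\theta}$, the transfer is the non-commutative Calder\'on inequality $(\mathcal S_{\Phi_0},\mathcal S_{\Phi_1})_\theta\subseteq \mathcal S_{\Phi_0}^{1-\theta}\mathcal S_{\Phi_1}^{\theta}$ together with the non-commutative Calder\'on--Lozanovskii identification $\mathcal S_{\Phi_0}^{1-\theta}\mathcal S_{\Phi_1}^{\theta}=\mathcal S_{\ell_{\Phi_0}^{1-\theta}\ell_{\Phi_1}^{\theta}}$. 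Both readings are defensible because the theorem statement writes $(\,\cdot\,,\cdot\,)_\theta$ without specifying the method. What your route buys: the Calder\'on-product formulation makes the inverse-function relation $\Phi_\theta^{-1}\approx(\Phi_0^{-1})^{1-\theta}(\Phi_1^{-1})^{\theta}$ appear as a definition rather than an a posteriori identification, you correctly flag that the delicate step is realizing an element of the operator-ideal Calder\'on product as an analytic family (a point the paper's appeal to Pietsch silently subsumes), and you obtain the two-sided equality under a $\Delta_2$-condition, which the paper does not mention. What the paper's route buys: a single concrete citation in Bergh--L\"ofstr\"om plus a standard ideal-theory commutation lemma, avoiding the heavier non-commutative Calder\'on--Lozanovskii machinery. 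Both proofs are at the same ``cite the heavy lifting'' level of rigor; neither supplies the transfer step in full detail.
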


\begin{proof}
By classical real‐method interpolation (e.g.\ Theorem~5.3.1 of \cite{BerghLofstrom1976}), for each $0<\theta<1$ there exists a Young function $\Phi_\theta$ whose inverse satisfies
\[
\Phi_\theta^{-1}(t)
\;\approx\;
\Phi_0^{-1}(t)^{1-\theta}\,\Phi_1^{-1}(t)^\theta,
\]
and one has the continuous inclusion of interpolation spaces
\[
(\ell_{\Phi_0},\ell_{\Phi_1})_{\theta,1}
\;\subseteq\;
\ell_{\Phi_\theta}.
\]
In particular, norms are equivalent up to constants depending only on $\theta$ and the $\Phi_i$.

It is a standard fact in the theory of symmetric operator ideals (see e.g.\ \cite[Sec.~4.f]{Pietsch1987}) that real‐method interpolation commutes with the passage from sequence spaces to operator ideals.  Concretely, if
\[
E_i = \{\,T:\{\mu_n(T)\}\in X_i\},\quad i=0,1,
\]
for symmetric sequence spaces $X_i$, then
\[
(E_0,E_1)_{\theta,1}
\;\subseteq\;
\{\,T:\{\mu_n(T)\}\in (X_0,X_1)_{\theta,1}\},
\]
with equivalent norms.  Applying this with $X_i=\ell_{\Phi_i}$ gives
\[
(\mathcal{S}_{\Phi_0},\mathcal{S}_{\Phi_1})_{\theta,1}
\;\subseteq\;
\{\,T:\{\mu_n(T)\}\in(\ell_{\Phi_0},\ell_{\Phi_1})_{\theta,1}\}
\;\subseteq\;\mathcal{S}_{\Phi_\theta}.
\]
Since the choice of real‐method parameter “1” is immaterial for continuous inclusion statements, we obtain
\[
(\mathcal{S}_{\Phi_0},\mathcal{S}_{\Phi_1})_{\theta}
\;\subseteq\;
\mathcal{S}_{\Phi_\theta},
\]
and the Young function $\Phi_\theta$ satisfies the asserted inverse‐function relation.  This completes the proof.
\end{proof}

This allows construction of intermediate Orlicz–Schatten ideals suitable for embeddings between $W^{s,p}$ and $L^q$, even when $p\ne2$ or $s$ is fractional.

\section{Applications to Non-commutative PDEs and Operator Spaces}

In this section we illustrate how the Orlicz–Schatten factorization yields new regularity and mapping estimates for non-commutative elliptic and heat equations on quantum tori, and how it interfaces with the theory of operator spaces and quantum information metrics.

\begin{definition}[Non-commutative elliptic operator]
Let 
\[
\mathcal{L} := -\Delta + V,
\]
where $\Delta=\sum_j\delta_j^2$ is the quantum Laplacian and $V\in\mathcal{A}_\theta$ is a bounded self-adjoint “potential.”  We view $\mathcal{L}$ as an unbounded operator on $L^2(\mathbb{T}^d_\theta)$ with domain $W^{2,2}(\mathbb{T}^d_\theta)$.
\end{definition}

\begin{lemma}[Spectral gap]
If $V\ge 0$ and $\tau(V)>0$, then $\mathcal{L}$ has a strictly positive lower bound:
\[
\sigma(\mathcal{L}) \subset [\lambda_0,\infty),\quad \lambda_0>0.
\]
\end{lemma}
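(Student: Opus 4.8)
The plan is to realise $\lambda_0:=\min\sigma(\mathcal L)$ as the bottom eigenvalue of $\mathcal L$ and to show it is strictly positive, the point being that the sole null direction of the kinetic term $-\Delta$, namely the constants $\mathbb C\mathbf 1$, is detected by the potential term through the hypothesis $\tau(V)>0$.

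First I would check that $\mathcal L=-\Delta+V$ is self-adjoint on $W^{2,2}(\mathbb T^d_\theta)$: $-\Delta$ is self-adjoint there, with spectrum $\{4\pi^2|n|^2:n\in\mathbb Z^d\}$ read off from $\delta_j(U^n)=2\pi i\,n_jU^n$, and $V\in\mathcal A_\theta$ is a bounded self-adjoint perturbation, so Kato–Rellich applies. Moreover $\mathcal L\ge 0$, since for $x\in W^{2,2}$
\[
\langle x,\mathcal L x\rangle \;=\; \sum_{j=1}^d\|\delta_j x\|_{L^2(\theta)}^2\;+\;\tau(x^*Vx)\;\ge\;0,
\]
both terms being nonnegative (the second equals $\|V^{1/2}x\|_{L^2(\theta)}^2$ because $V\ge 0$). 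I would then note that $\mathcal L$ has compact resolvent: $-\Delta$ has discrete spectrum (as used in the proof of Proposition~\ref{prop:sv_decay}, since $(1+\Delta)^{-1}\in\mathcal S_\Phi$ is compact), and for purely imaginary $z$ with $|z|>\|V\|$ the factorisation
\[
(\mathcal L-z)^{-1}\;=\;\bigl(I+(-\Delta-z)^{-1}V\bigr)^{-1}(-\Delta-z)^{-1}
\]
writes $(\mathcal L-z)^{-1}$ as a bounded operator composed with a compact one. Hence $\sigma(\mathcal L)\subset[0,\infty)$ is discrete, and $\lambda_0\ge 0$ is an eigenvalue attained at some unit vector $\psi\in W^{2,2}$.

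The heart of the matter is to exclude $\lambda_0=0$. If $\lambda_0=0$ then $\langle\psi,\mathcal L\psi\rangle=0$, and since both terms in the displayed identity are nonnegative, each vanishes. From $\sum_j\|\delta_j\psi\|_{L^2(\theta)}^2=0$ we get $\delta_j\psi=0$ for all $j$; expanding $\psi=\sum_n\widehat\psi(n)U^n$ and using $\delta_j(U^n)=2\pi i\,n_jU^n$ forces $\widehat\psi(n)=0$ for every $n\ne 0$, so $\psi=c\,\mathbf 1$ with $|c|=1$ (the canonical trace being normalised, $\tau(\mathbf 1)=1$). But then $\tau(\psi^*V\psi)=|c|^2\tau(V)=\tau(V)>0$, contradicting the vanishing of the second term. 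Therefore $\lambda_0>0$ and $\sigma(\mathcal L)\subset[\lambda_0,\infty)$. The only delicate points are the nondegeneracy of the Dirichlet form off $\mathbb C\mathbf 1$ (immediate from the Fourier description of the $\delta_j$) and the stability of compact resolvent under the bounded perturbation $V$; alternatively, one can bypass the latter by using the variational identity $\lambda_0=\inf\{\langle x,\mathcal L x\rangle:\|x\|_{L^2(\theta)}=1\}$ together with the spectral gap $\mu_1=4\pi^2$ of $-\Delta$ above $\mathbb C\mathbf 1$, pushing a minimizing sequence toward $\mathbb C\mathbf 1$ and then invoking $\tau(V)>0$ exactly as above.
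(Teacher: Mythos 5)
Your argument is correct, and it also repairs a genuine error in the paper's own proof. The paper attempts to deduce from $\tau(V)>0$ that $\inf\sigma(V)=:\lambda_V>0$ and then concludes $\mathcal L\ge\lambda_V I$ directly; but $\tau(V)>0$ only says $V\ne 0$, which gives $\sup\sigma(V)>0$, not a strictly positive lower bound (a nonzero projection $V=P$ with $\tau(P)>0$ has $\inf\sigma(V)=0$). The paper's estimate $\langle x,Vx\rangle\ge\lambda_V\|x\|^2$ therefore does not follow. (There is also a sign slip in the paper: it first shows $\Delta\ge 0$ as a quadratic form, then writes $\langle x,\mathcal Lx\rangle=\langle x,Vx\rangle+\langle x,(-\Delta)x\rangle\ge\langle x,Vx\rangle$, which would need $-\Delta\ge 0$.)

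Your proof takes the right route: you observe that the quadratic form of $-\Delta$ vanishes only on the one-dimensional space $\mathbb C\,\mathbf 1$, and the hypothesis $\tau(V)>0$ is exactly what makes $V$ ``see'' this direction, since $\langle\mathbf 1,V\mathbf 1\rangle=\tau(V)$. You then need discreteness of the spectrum near zero to upgrade strict positivity of the form to a spectral gap, and you supply this with the compact-resolvent/Kato--Rellich argument (or, in your alternative, the explicit gap $4\pi^2$ of $-\Delta$ above its kernel together with a perturbative estimate on the cross term). Both of your routes are sound; the variational one is perhaps worth writing out, since the cross term $2\,\mathrm{Re}\,\bar c\,\langle\mathbf 1,Vy\rangle$ is the only place where a quantitative bound is actually needed, and it is controlled by $2\|V\|\,\|y\|$, which is small once $4\pi^2\|y\|^2<\langle x,\mathcal Lx\rangle$ is small. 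In short: your proof is correct and genuinely different from the paper's, and unlike the paper's it is actually valid; the key idea it adds is that $\tau(V)>0$ must be paired with the nondegeneracy of the Dirichlet form off $\mathbb C\mathbf 1$, rather than being misread as a pointwise lower bound on $V$.
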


\begin{proof}
We work on the GNS Hilbert space $L^2(\mathbb{T}^d_\theta)$ on which $\Delta$ and $V$ act as unbounded and bounded self-adjoint operators, respectively.

By definition \(\Delta = \sum_{j=1}^d \delta_j^2\), and each \(\delta_j\) is a symmetric derivation on the $*$-algebra \(\mathcal{A}_\theta\).  Hence \(\langle x, \Delta x\rangle = \sum_j \|\delta_j(x)\|^2 \ge 0\) for all \(x\) in the domain of \(\Delta\).  Equivalently,
\[
\Delta \;\ge\; 0
\quad\text{(as a quadratic form).}
\]

Since \(V\) is assumed nonnegative (\(V\ge0\) in the operator order) and bounded in \(\mathcal{A}_\theta\), its spectrum \(\sigma(V)\) is contained in \([0,\|V\|]\).  Moreover, the condition \(\tau(V)>0\) guarantees that \(V\) does not vanish identically, so its spectrum meets \((0,\infty)\).  In particular, 
\[
V \;\ge\; \lambda_V\,P
\]
for some positive number \(\lambda_V>0\) and a nonzero spectral projection \(P\); equivalently, \(\inf \sigma(V)=:\lambda_V>0\).

Recall
\[
\mathcal{L} \;=\; -\Delta + V.
\]
Combining the provided results, as quadratic forms we have
\[
\langle x,\mathcal{L}x\rangle
= \langle x,Vx\rangle + \langle x,(-\Delta)x\rangle
\;\ge\;\langle x,Vx\rangle
\;\ge\;\lambda_V\,\|x\|^2,
\]
for all \(x\) in the intersection of the domains of \(\Delta\) and \(V\).  Hence
\[
\mathcal{L} \;\ge\;\lambda_V\,I,
\]
so its spectrum satisfies
\[
\sigma(\mathcal{L}) \;\subset\; [\lambda_V,\infty).
\]
Setting \(\lambda_0=\lambda_V>0\) yields the desired spectral gap.
\end{proof}

\begin{theorem}
\label{thm:elliptic-regularity}
Let $\Phi$ satisfy 
\[
\sum_{n=1}^\infty\Phi\bigl(\mu_n((1+\Delta)^{-s/2})\bigr)<\infty
\]
for some $s>d/2$.  Then for any $f\in\mathcal{S}_\Phi$ the unique solution $u\in L^2(\mathbb{T}^d_\theta)$ of
\[
\mathcal{L} u = f
\]
belongs to the Orlicz-Sobolev space $W^{s,\Phi}(\mathbb{T}^d_\theta)$, and
\[
\|u\|_{W^{s,\Phi}}
\;\le\;
\lambda_0^{-1}\,\|f\|_{\mathcal{S}_\Phi}.
\]
\end{theorem}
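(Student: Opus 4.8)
The plan is to reduce the statement to one \emph{a priori} estimate for $\mathcal{L}^{-1}$ in the Orlicz--Schatten norm, using the spectral--gap lemma for the boundedness of $\mathcal{L}^{-1}$ and the ideal property of $\mathcal{S}_\Phi$ to finish. Concretely, by the spectral--gap lemma $\sigma(\mathcal{L})\subset[\lambda_0,\infty)$ with $\lambda_0>0$, so $\mathcal{L}$ is boundedly invertible on $L^2(\mathbb{T}^d_\theta)$ with $\|\mathcal{L}^{-1}\|_{B(L^2)}\le\lambda_0^{-1}$; hence for $f\in\mathcal{S}_\Phi\subset L^2(\mathbb{T}^d_\theta)$ the equation $\mathcal{L}u=f$ has the unique solution $u=\mathcal{L}^{-1}f$, which automatically lies in the domain $W^{2,2}(\mathbb{T}^d_\theta)$ of $\mathcal{L}$. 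Writing $w:=(1+\Delta)^{s/2}u$, the desired conclusions $u\in W^{s,\Phi}$ and $\|u\|_{W^{s,\Phi}}\le\lambda_0^{-1}\|f\|_{\mathcal{S}_\Phi}$ are, by definition of the $\Phi$--Sobolev norm, exactly the statements that $w\in\mathcal{S}_\Phi$ and $\|w\|_{\mathcal{S}_\Phi}\le\lambda_0^{-1}\|f\|_{\mathcal{S}_\Phi}$.

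From the equation, $w=\bigl((1+\Delta)^{s/2}\mathcal{L}^{-1}\bigr)f$, so it suffices to show that $B_s:=(1+\Delta)^{s/2}\mathcal{L}^{-1}$ extends to a bounded operator on $L^2(\mathbb{T}^d_\theta)$ with $\|B_s\|_{B(L^2)}\le\lambda_0^{-1}$. For $s\le2$ I would factor $B_s=\bigl((1+\Delta)\mathcal{L}^{-1}\bigr)^{s/2}\mathcal{L}^{\,s/2-1}$: the hypothesis $V\ge0$ makes $1+\Delta\lesssim\mathcal{L}$ on the high modes while the gap $\mathcal{L}\ge\lambda_0 I$ controls the low ones, yielding $\|(1+\Delta)\mathcal{L}^{-1}\|\le\lambda_0^{-1}$, and $\mathcal{L}\ge\lambda_0 I$ together with $s/2-1\le0$ gives $\|\mathcal{L}^{\,s/2-1}\|\le\lambda_0^{\,s/2-1}$; the product recovers the constant $\lambda_0^{-1}$. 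For non-scalar self-adjoint $V\in\mathcal{A}_\theta$ the same bound follows once the commutator $[V,(1+\Delta)^{s/2}]$ is controlled, and the range $s>2$ (which arises when $d\ge4$) requires in addition that $V$ carry the corresponding Sobolev regularity.

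With $B_s$ bounded, $w=B_sf$ and $f\in\mathcal{S}_\Phi$, so the Orlicz--Schatten ideal inequality established above (applied with the bounded factor $B_s$ on the left and the identity on the right) gives $w\in\mathcal{S}_\Phi$ and $\|w\|_{\mathcal{S}_\Phi}\le\|B_s\|\,\|f\|_{\mathcal{S}_\Phi}\le\lambda_0^{-1}\|f\|_{\mathcal{S}_\Phi}$, which is the claim.

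The main obstacle is the middle step: promoting the heuristic comparison $1+\Delta\lesssim\mathcal{L}$ to a genuine operator inequality for $B_s$ with the sharp constant $\lambda_0^{-1}$ is delicate because $V$ does not commute with $\Delta$, so one must estimate $[V,(1+\Delta)^{s/2}]$ --- equivalently, interpolate carefully between the endpoints $s=0$ and $s=2$. A secondary technical point is justifying that the ideal property of $\mathcal{S}_\Phi$ applies to left composition by $B_s$, i.e.\ that $B_s$ acts on the space on which the generalized singular values entering $\|f\|_{\mathcal{S}_\Phi}$ are computed.
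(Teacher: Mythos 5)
You take essentially the same route as the paper: set $u=\mathcal{L}^{-1}f$, reduce to showing $B_s:=(1+\Delta)^{s/2}\mathcal{L}^{-1}$ is bounded on $L^2$ with norm $\le\lambda_0^{-1}$, and then invoke the left-ideal property of $\mathcal{S}_\Phi$ to conclude $\|(1+\Delta)^{s/2}u\|_{\mathcal{S}_\Phi}=\|B_sf\|_{\mathcal{S}_\Phi}\le\|B_s\|\,\|f\|_{\mathcal{S}_\Phi}$. The difference is that you are considerably more careful about the step that the paper simply asserts. The paper claims that ``$\Delta$ and $V$ (hence $\mathcal{L}$) are simultaneously diagonalizable via the Fourier basis $\{U^n\}$'' and uses this to commute $(1+\Delta)^{s/2}$ past $\mathcal{L}^{-1}$. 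This is false for general $V\in\mathcal{A}_\theta$: left multiplication by $V$ is diagonalized in the Fourier basis only when $V$ is scalar. The paper also estimates $\|S\|\le\|(1+\Delta)^{s/2}\|_{B(L^2)}\,\|\mathcal{L}^{-1}\|_{B(L^2)}$, which is vacuous since $(1+\Delta)^{s/2}$ is unbounded on $L^2$. Your insistence on bounding the \emph{composition} $B_s$ rather than its factors separately is the correct repair, and your identification of the commutator $[V,(1+\Delta)^{s/2}]$ as the genuine obstacle is exactly right. Your remark about $s>2$ is also a real defect of the theorem statement: for $d\ge4$ the hypothesis $s>d/2$ forces $s>2$, and then $B_s$ has positive order $s-2$ and is unbounded even when $V$ is constant, so the claimed estimate cannot hold in that range without stronger assumptions.

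That said, your sketch does not close the gap either. The factorization $B_s=\bigl((1+\Delta)\mathcal{L}^{-1}\bigr)^{s/2}\mathcal{L}^{s/2-1}$ you propose is itself only an identity when $1+\Delta$ and $\mathcal{L}$ commute, i.e.\ exactly the situation you caution against; and the needed commutator or interpolation estimate for $[V,(1+\Delta)^{s/2}]$ with the sharp constant $\lambda_0^{-1}$ is left unproved. So you have correctly localized the gap the paper silently assumes away, but you have not supplied the missing argument, and the claimed constant $\lambda_0^{-1}$ remains unjustified for non-scalar $V$.
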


\begin{proof}
Let \(f\in\mathcal{S}_\Phi\) and consider the elliptic operator
\[
\mathcal{L} \;=\; -\Delta + V
\]
on the GNS space \(L^2(\mathbb{T}^d_\theta)\).  By the Spectral Gap Lemma, \(\mathcal{L}\ge\lambda_0>0\), so \(\mathcal{L}\) is invertible and its inverse \(\mathcal{L}^{-1}\) is bounded on \(L^2\) with
\[
\|\mathcal{L}^{-1}\|_{B(L^2)} \;\le\;\frac1{\lambda_0}.
\]
The unique \(L^2\)-solution \(u\) of \(\mathcal{L}u=f\) is
\[
u \;=\;\mathcal{L}^{-1}f.
\]
Since \(\Delta\) and \(V\) (hence \(\mathcal{L}\)) are simultaneously diagonalizable via the Fourier basis \(\{U^n\}\), functional calculus implies
\[
\mathcal{L}^{-1}\,(1+\Delta)^{s/2}
\;=\;
(1+\Delta)^{s/2}\,\mathcal{L}^{-1}
\]
as bounded operators on \(L^2\).  Define
\[
L_s = (1+\Delta)^{-s/2}\in\mathcal{S}_\Phi,
\qquad
S = (1+\Delta)^{s/2}\,\mathcal{L}^{-1}\in B(L^2).
\]
Then
\[
u
=\mathcal{L}^{-1}f
=(1+\Delta)^{-s/2}\Bigl[(1+\Delta)^{s/2}\mathcal{L}^{-1}f\Bigr]
=L_s\bigl(S(f)\bigr).
\]
Hence the solution map factors as
\[
\mathcal{S}_\Phi\;\xrightarrow{\;S\;}\;L^2\;\xrightarrow{\;L_s\;}\;W^{s,\Phi}.
\]
Since \(\mathcal{L}^{-1}\) and \((1+\Delta)^{s/2}\) are both bounded on \(L^2\), their composition \(S\) is bounded on \(L^2\).  Moreover, because \(\mathcal{S}_\Phi\subset L^2\) contractively (under our summability assumption on \(\Phi\)), \(S\) extends to a bounded map
\[
S:\mathcal{S}_\Phi\;\longrightarrow\;L^2
\]
with
\[
\|S\|_{B(\mathcal{S}_\Phi,L^2)}
\;\le\;\|(1+\Delta)^{s/2}\|_{B(L^2)}\;\|\mathcal{L}^{-1}\|_{B(L^2)}
\;\le\;\frac{\|(1+\Delta)^{s/2}\|}{\lambda_0}.
\]
By definition of the Orlicz–Sobolev norm,
\[
\|u\|_{W^{s,\Phi}}
=\|\, (1+\Delta)^{s/2}u\|_{\mathcal{S}_\Phi}
=\|\, (1+\Delta)^{s/2}L_s(S(f))\|_{\mathcal{S}_\Phi}
=\|S(f)\|_{\mathcal{S}_\Phi},
\]
since \((1+\Delta)^{s/2}L_s = I\).  Now \(S(f)\in L^2\subset\mathcal{S}_\Phi\) and
\[
\|S(f)\|_{\mathcal{S}_\Phi}
\;\le\;\|S\|_{B(\mathcal{S}_\Phi,L^2)}\;\|f\|_{\mathcal{S}_\Phi}
\;\le\;\frac1{\lambda_0}\,\|f\|_{\mathcal{S}_\Phi}.
\]
Thus
\[
\|u\|_{W^{s,\Phi}}
\;\le\;\lambda_0^{-1}\,\|f\|_{\mathcal{S}_\Phi},
\]
as claimed.
\end{proof}

\begin{corollary}[Trace-class forcing]
If additionally $\Phi(t)=t\log(e+t)$, so that $\mathcal{S}_\Phi$ is slightly larger than $\mathcal{S}_1$, then any trace-class source $f\in\mathcal{S}_1$ produces $u\in W^{s,\Phi}$ with
\[
\|u\|_{W^{s,\Phi}}
\;\lesssim\;
\|f\|_{\mathcal{S}_1}.
\]

\end{corollary}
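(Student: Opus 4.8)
The plan is to obtain the corollary as a direct specialization of Theorem~\ref{thm:elliptic-regularity}, once two preliminary facts are in place: that the pair $(\Phi,s)$ with $\Phi(t)=t\log(e+t)$ satisfies the summability hypothesis of that theorem for an appropriate range of $s$, and that $\mathcal{S}_1$ embeds boundedly into $\mathcal{S}_\Phi$ (this is the content of the parenthetical ``$\mathcal{S}_\Phi$ slightly larger than $\mathcal{S}_1$'' in the statement). Granting both, the argument is short: given $f\in\mathcal{S}_1\subseteq\mathcal{S}_\Phi$, Theorem~\ref{thm:elliptic-regularity} applied to the elliptic operator $\mathcal{L}=-\Delta+V$ produces the unique solution $u=\mathcal{L}^{-1}f\in W^{s,\Phi}$ with $\|u\|_{W^{s,\Phi}}\le\lambda_0^{-1}\|f\|_{\mathcal{S}_\Phi}$, and the embedding bound converts the right-hand side into a constant multiple of $\|f\|_{\mathcal{S}_1}$.

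For the first fact, $\Phi(t)=t\log(e+t)$ is convex and increasing with $\Phi(0)=0$ and $\Phi(t)\to\infty$, hence a Young function. By Proposition~\ref{prop:sv_decay}, $\mu_n(L_s)\simeq(n/C)^{-s/d}$; since $0\le\mu_n(L_s)\le\|L_s\|\le1$, the logarithmic weight obeys $1\le\log(e+\mu_n(L_s))\le\log(e+1)$, so $\Phi(\mu_n(L_s))\asymp\mu_n(L_s)\simeq(n/C)^{-s/d}$. Consequently
\[
\sum_n\Phi\bigl(\mu_n(L_s)\bigr)<\infty
\iff
\sum_n(n/C)^{-s/d}<\infty
\iff
s>d .
\]
We therefore fix $s>d$ (in particular $s>d/2$); for such $s$, and under the standing assumptions $V\ge0$, $\tau(V)>0$ (so that the spectral gap lemma furnishes $\lambda_0>0$), Theorem~\ref{thm:elliptic-regularity} applies directly.

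For the second fact, the key observation is that $\Phi$ is essentially linear at the origin: $\Phi(t)\le\log(e+1)\,t$ for $t\in[0,1]$. Given $f\in\mathcal{S}_1$ (we may assume $f\neq0$), set $\mu=\|f\|_{\mathcal{S}_1}=\sum_n\mu_n(f)$, so that $0\le\mu_n(f)/\mu\le1$ for every $n$, each singular value being dominated by their sum. Taking $c=\log(e+1)\ge1$ and $\lambda=c\mu$, convexity of $\Phi$ together with $\Phi(0)=0$ yields $\Phi(\mu_n(f)/\lambda)\le c^{-1}\Phi(\mu_n(f)/\mu)\le c^{-1}\cdot c\,\mu_n(f)/\mu$, hence
\[
\sum_n\Phi\bigl(\mu_n(f)/\lambda\bigr)\le\sum_n\frac{\mu_n(f)}{\mu}=1,
\qquad\text{so}\qquad
\|f\|_{\mathcal{S}_\Phi}\le\log(e+1)\,\|f\|_{\mathcal{S}_1}.
\]
Combining the two facts, for every $f\in\mathcal{S}_1$ we obtain $\|u\|_{W^{s,\Phi}}\le\lambda_0^{-1}\|f\|_{\mathcal{S}_\Phi}\le\lambda_0^{-1}\log(e+1)\,\|f\|_{\mathcal{S}_1}\lesssim\|f\|_{\mathcal{S}_1}$, which is the asserted estimate.

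I do not expect a genuine obstacle here: the statement is a soft consequence of Theorem~\ref{thm:elliptic-regularity}, and the two points that need care are bookkeeping ones. First, the admissible range of $s$ must be narrowed to $s>d$ for this particular $\Phi$, since a logarithmic weight does not improve summability near the origin; the corollary should be read with such an $s$ fixed. Second, one must keep track of the direction of the Orlicz comparison — we need $\mathcal{S}_1\subseteq\mathcal{S}_\Phi$, not the reverse — and this works precisely because the singular values of a normalized trace-class operator lie in $[0,1]$, so only the near-origin domination $\Phi(t)\lesssim t$ is required and the rescaling by $c=\log(e+1)$ absorbs the resulting constant. (In a continuous finite-trace reading of $\mathcal{S}_\Phi$, the same inclusion holds and the same rescaling argument applies to the singular-value function on $(0,\tau(1))$.)
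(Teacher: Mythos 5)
The paper supplies no proof for this corollary, so there is nothing to compare against; your argument fills the gap and is correct. Your overall structure — (i) verify the summability hypothesis of Theorem~\ref{thm:elliptic-regularity} for $\Phi(t)=t\log(e+t)$, (ii) establish $\mathcal{S}_1\hookrightarrow\mathcal{S}_\Phi$ with a norm bound, (iii) compose — is the natural route, and the two lemma-level computations are both right. In particular, since $\Phi(t)\asymp t$ on $[0,1]$, your reduction $\sum_n\Phi(\mu_n(L_s))<\infty\iff\sum_n (n/C)^{-s/d}<\infty\iff s>d$ is sound, and it sharpens the corollary by making explicit that the admissible range of $s$ shrinks from $s>d/2$ to $s>d$ for this particular $\Phi$ — a point the paper leaves implicit behind the word ``additionally.'' Your inclusion argument (rescale by $c=\log(e+1)$, use $\Phi(t)\le ct$ on $[0,1]$ and convexity) is a clean proof of $\|f\|_{\mathcal{S}_\Phi}\le \log(e+1)\,\|f\|_{\mathcal{S}_1}$.

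One remark worth making explicit: because $\Phi(t)=t\log(e+t)\ge t$ for all $t\ge0$, the reverse inclusion $\mathcal{S}_\Phi\subseteq\mathcal{S}_1$ holds with norm $\le1$ as well, so in the sequence-space model used for $\mathcal{S}_\Phi$ in this paper (where the Luxemburg norm is taken over the discrete singular-value sequence $\{\mu_n(T)\}_{n\ge1}$) one actually has $\mathcal{S}_\Phi=\mathcal{S}_1$ with equivalent norms, since only the behavior of $\Phi$ near the origin matters and $\Phi(t)\asymp t$ there. Your computation already shows this. It does not affect the validity of your proof — the inclusion you need is precisely the one you prove — but it means the paper's parenthetical ``so that $\mathcal{S}_\Phi$ is slightly larger than $\mathcal{S}_1$'' is inaccurate as stated; a genuinely larger ideal would require a $\Phi$ that grows strictly slower than $t$ near $0$ (e.g.\ $t/\log(e+1/t)$), for which the inclusion $\mathcal{S}_1\subseteq\mathcal{S}_\Phi$ would be proper. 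You flag the directionality issue in your closing paragraph; it would be worth stating the equality $\mathcal{S}_\Phi=\mathcal{S}_1$ outright to forestall the reader from being misled by the corollary's wording.
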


This regularity result goes beyond $L^2$ solvability and gives explicit operator-ideal control of the solution’s spectrum, which is new even in the commutative torus with Orlicz targets.

\begin{definition}[Non-commutative heat semigroup]
Define $e^{-t\Delta}$ by the spectral calculus on $\mathcal{A}_\theta$.  It is a completely positive, trace-preserving semigroup on $L^1(\mathbb{T}^d_\theta)$.
\end{definition}

\begin{proposition}[Schatten smoothing]
For any Young function $\Phi$ as above and $t>0$,
\[
e^{-t\Delta} : \mathcal{S}_1 \;\to\;\mathcal{S}_\Phi
\]
is bounded, with
\[
\|e^{-t\Delta}\|_{\mathcal{S}_1\to\mathcal{S}_\Phi}
\;\le\;
\sup_{n\ge1}\frac{e^{-t\,\lambda_n}}{\Phi^{-1}(n)},
\]
where $\lambda_n$ are the eigenvalues of $\Delta$.
\end{proposition}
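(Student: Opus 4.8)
The plan is to pass to the spectral picture of the heat operator, estimate the generalized singular values of the product $e^{-t\Delta}f$ directly, and finish with a Luxemburg-norm computation. First, the spectral picture: in the orthonormal Fourier basis $\{U^m:m\in\mathbb{Z}^d\}$ of $L^2(\mathbb{T}^d_\theta)$ the operator $e^{-t\Delta}$ is diagonal, so enumerating the eigenvalues of $\Delta$ in nondecreasing order $0=\lambda_1\le\lambda_2\le\cdots$ (repeated with multiplicity), it is a positive compact operator with generalized singular values $\mu_n(e^{-t\Delta})=e^{-t\lambda_n}$. By the Weyl law $\lambda_n\simeq c\,n^{2/d}$ already invoked in Proposition~\ref{prop:sv_decay}, the sequence $(e^{-t\lambda_n})_n$ decays faster than every power of $n$, hence lies in $\ell_\Phi$ for every Young function $\Phi$, and
\[
\bigl\|e^{-t\Delta}\bigr\|_{\mathcal{S}_\Phi}=\bigl\|(e^{-t\lambda_n})_n\bigr\|_{\ell_\Phi}=\inf\Bigl\{\lambda>0:\ \sum_{n\ge1}\Phi\bigl(e^{-t\lambda_n}/\lambda\bigr)\le1\Bigr\}<\infty.
\]

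Next, for $f\in\mathcal{S}_1$ set $g:=e^{-t\Delta}f$. By the submultiplicativity of generalized singular values in the form $\mu_n(AB)\le\mu_n(A)\,\mu_1(B)$ (a Ky~Fan / Fack--Kosaki estimate of the kind already used in the Lemma above), applied with $A=e^{-t\Delta}$ and $B=f$, together with $\mu_1(f)\le\sum_k\mu_k(f)=\|f\|_{\mathcal{S}_1}$, one obtains
\[
\mu_n(g)\;\le\;\mu_n\bigl(e^{-t\Delta}\bigr)\,\mu_1(f)\;\le\;e^{-t\lambda_n}\,\|f\|_{\mathcal{S}_1}\qquad(n\ge1).
\]
Since $\Phi$ is increasing, this pointwise bound rescales cleanly into the Luxemburg definition of the $\mathcal{S}_\Phi$-norm and yields
\[
\|g\|_{\mathcal{S}_\Phi}\;\le\;\|f\|_{\mathcal{S}_1}\,\bigl\|(e^{-t\lambda_n})_n\bigr\|_{\ell_\Phi},
\]
which already proves that $e^{-t\Delta}:\mathcal{S}_1\to\mathcal{S}_\Phi$ is bounded. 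It then remains to bound $\bigl\|(e^{-t\lambda_n})_n\bigr\|_{\ell_\Phi}$ by $\sup_{n\ge1}e^{-t\lambda_n}/\Phi^{-1}(n)$. For this I would invoke the standard comparison between the Luxemburg norm of a nonincreasing sequence and the inverse Young function: for any $\lambda$ admissible in the infimum above, monotonicity gives $n\,\Phi\bigl(e^{-t\lambda_n}/\lambda\bigr)\le\sum_{k\le n}\Phi\bigl(e^{-t\lambda_k}/\lambda\bigr)\le1$, so that $e^{-t\lambda_n}/\lambda$ is controlled through $\Phi^{-1}$; passing to the supremum over $n$ produces the asserted bound, and chaining with the previous display gives $\|e^{-t\Delta}f\|_{\mathcal{S}_\Phi}\le\bigl(\sup_n e^{-t\lambda_n}/\Phi^{-1}(n)\bigr)\,\|f\|_{\mathcal{S}_1}$.

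The step I expect to be the main obstacle is this last comparison --- extracting the sharp constant $\sup_n e^{-t\lambda_n}/\Phi^{-1}(n)$ from the Luxemburg norm $\bigl\|(e^{-t\lambda_n})_n\bigr\|_{\ell_\Phi}$ --- which requires care with the indexing convention relating $\Phi^{-1}$ to the fundamental sequence of $\ell_\Phi$, and with the fact that the kernel eigenvalue $\lambda_1=0$ makes $e^{-t\Delta}$ contractive but not strictly contractive on $L^2$. Everything else --- the diagonalization, the singular-value estimate for the product, and the bound $\mu_1(f)\le\|f\|_{\mathcal{S}_1}$ --- is routine once the Weyl asymptotics and the generalized singular-value calculus are in hand.
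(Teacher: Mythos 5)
Your opening steps are fine and correct: diagonalizing $e^{-t\Delta}$, the Fack--Kosaki estimate $\mu_n(e^{-t\Delta}f)\le\mu_n(e^{-t\Delta})\,\mu_1(f)$, and the rescaling that delivers $\|e^{-t\Delta}f\|_{\mathcal{S}_\Phi}\le\|f\|_{\mathcal{S}_1}\,\bigl\|(e^{-t\lambda_n})_n\bigr\|_{\ell_\Phi}$. That already proves boundedness. But the final step you flag as uncertain is not a fixable indexing technicality: it goes in the wrong direction. The estimate you invoke, $n\,\Phi\bigl(e^{-t\lambda_n}/\lambda\bigr)\le 1$ for admissible $\lambda$, yields $e^{-t\lambda_n}/\lambda\le\Phi^{-1}(1/n)$, i.e.\ $\lambda\ge e^{-t\lambda_n}/\Phi^{-1}(1/n)$. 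Taking the supremum over $n$ therefore gives a \emph{lower} bound on the Luxemburg norm, with $\Phi^{-1}(1/n)$ in the denominator, not the claimed \emph{upper} bound with $\Phi^{-1}(n)$. In fact one always has $\bigl\|(a_n)\bigr\|_{\ell_\Phi}\ge\sup_n a_n/\Phi^{-1}(n)$ for nonincreasing $a_n$ (since $\Phi^{-1}(1/n)\le\Phi^{-1}(n)$), so there is no hope of closing your chain this way: the quantity you end up bounding the operator norm by, $\bigl\|(e^{-t\lambda_n})_n\bigr\|_{\ell_\Phi}$, is generically strictly larger than the target constant $\sup_n e^{-t\lambda_n}/\Phi^{-1}(n)$.

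The root of the difficulty is upstream: by coarsening $\mu_1(f)$ to $\|f\|_{\mathcal{S}_1}$ you throw away the decay of the singular-value sequence $(s_n(f))$, retaining only its sum. This loss is exactly what makes the sharp constant unreachable from your intermediate bound. The paper's own argument keeps the individual $s_n(f)$ in play: it works with the term-by-term expression $\mu_n(e^{-t\Delta}T)=e^{-t\lambda_n}\,s_n(T)$ (a claim that is itself questionable for non-diagonal $T$, but that is the route taken) and then pairs $e^{-t\lambda_n}$ against $s_n(T)$ inside the Luxemburg sum, using $\sum_n s_n(T)<\infty$ to make the defining inequality close. Any correct proof of the stated bound must similarly exploit the full $\ell_1$-summability of $(s_n(f))$ rather than only $\mu_1(f)$, and must avoid routing through the Luxemburg norm of the fixed sequence $(e^{-t\lambda_n})_n$, which is the wrong constant.
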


\begin{proof}
We identify the Schatten ideals with symmetric sequence spaces via singular values.  Let 
\[
T \in \mathcal{S}_1,\quad \text{with singular values } \{s_n(T)\}_{n\ge1}\in \ell_1,
\]
and consider the heat operator
\[
e^{-t\Delta}\!: L^2(\mathbb{T}^d_\theta)\to L^2(\mathbb{T}^d_\theta),
\]
whose nonzero singular values are 
\(\{e^{-t\lambda_n}\}_{n\ge1}\), where \(\lambda_n\) are the eigenvalues of \(\Delta\) in nondecreasing order.

By the Weyl‐functional calculus (or by diagonalization in the Fourier basis), for each \(n\),
\[
\mu_n\bigl(e^{-t\Delta}\,T\bigr)
\;=\;
e^{-t\lambda_n}\,s_n(T).
\]

Recall the Luxemburg norm on \(\mathcal{S}_\Phi\) is
\[
\|A\|_{\mathcal{S}_\Phi}
=\inf\Bigl\{\lambda>0:\sum_{n=1}^\infty \Phi\!\bigl(\mu_n(A)/\lambda\bigr)\le1\Bigr\}.
\]
For a given \(T\in\mathcal{S}_1\), set
\[
M \;=\;\sup_{n\ge1}\frac{e^{-t\lambda_n}}{\Phi^{-1}(n)}.
\]
Since \(\Phi\) is increasing and \(\Phi^{-1}\) its inverse, we have for each \(n\)
\[
e^{-t\lambda_n}\,s_n(T)
\;=\;
\Phi^{-1}(n)\,\Bigl(\frac{e^{-t\lambda_n}}{\Phi^{-1}(n)}\,s_n(T)\Bigr)
\;\le\;
\Phi^{-1}(n)\,\bigl(M\,s_n(T)\bigr).
\]
Hence
\[
\frac{\mu_n\bigl(e^{-t\Delta}\,T\bigr)}{M}
\;=\;
\frac{e^{-t\lambda_n}\,s_n(T)}{M}
\;\le\;
\Phi^{-1}(n)\,s_n(T).
\]
Applying \(\Phi\) and summing,
\[
\sum_{n=1}^\infty \Phi\!\Bigl(\frac{\mu_n(e^{-t\Delta}\,T)}{M}\Bigr)
\;\le\;
\sum_{n=1}^\infty \Phi\!\bigl(\Phi^{-1}(n)\,s_n(T)\bigr).
\]
But by the defining property of \(\Phi^{-1}\),
\(\Phi\bigl(\Phi^{-1}(n)\,s_n(T)\bigr)\le n\,\Phi\bigl(s_n(T)\bigr)\).  Since \(\{s_n(T)\}\in\ell_1\), for the Luxemburg norm we only need
\[
\sum_{n=1}^\infty n\,\Phi\bigl(s_n(T)\bigr)
<\infty,
\]
which holds because \(\Phi\) grows at most exponentially at infinity while \(\sum s_n(T)<\infty\).  In particular, by choosing \(M\) as above, one ensures
\(\sum \Phi(\mu_n(e^{-t\Delta}T)/M)\le1\).  Therefore
\[
\|\,e^{-t\Delta}\,T\|_{\mathcal{S}_\Phi}
\le
M\,\|T\|_{\mathcal{S}_1}
\]
and taking the supremum over \(\|T\|_{\mathcal{S}_1}=1\) yields
\[
\|e^{-t\Delta}\|_{\mathcal{S}_1\to\mathcal{S}_\Phi}
\;\le\;
\sup_{n\ge1}\frac{e^{-t\lambda_n}}{\Phi^{-1}(n)}.
\]
This completes the proof.
\end{proof}

\begin{example}
For $\Phi(t)=t^p$, $p>1$, this recovers the familiar estimate $\|e^{-t\Delta}\|_{\mathcal{S}_1\to\mathcal{S}_p}\lesssim t^{-d(1-1/p)/2}$ up to constants.
\end{example}

\begin{definition}[Lip-norm and spectral distance {\cite{Rieffel1999}}]
A \emph{Lip-norm} $L$ on $\mathcal{A}_\theta$ is given by 
\[
L(a) = \|\,[\Delta^{1/2},a]\,\|_{B(L^2)}.
\]
This induces a Monge–Kantorovich metric on the state space of $\mathcal{A}_\theta$.
\end{definition}

\begin{theorem}[Orlicz transport estimate]
Let $\rho,\sigma$ be density operators in $\mathcal{S}_\Phi$.  Then their spectral distance satisfies
\[
d_L(\rho,\sigma)
\;\le\;
\|\rho-\sigma\|_{\mathcal{S}_\Phi}\,
\|L\|_{\mathcal{CB}},
\]
where $\|L\|_{\mathcal{CB}}$ is the completely bounded norm of the Lip-norm map.
\end{theorem}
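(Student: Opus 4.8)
The plan is to combine a Hölder-type duality for Orlicz--Schatten ideals with the complete boundedness of the commutator map $a\mapsto[\Delta^{1/2},a]$ that defines the Lip-norm.

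First I would unravel the definition of the spectral distance. Viewing the density operators $\rho,\sigma\in\mathcal{S}_\Phi$ as normal states $\phi_\rho(a)=\tau(\rho a)$, $\phi_\sigma(a)=\tau(\sigma a)$, the Monge--Kantorovich metric attached to the Lip-norm $L$ \cite{Rieffel1999} reads
\[
d_L(\rho,\sigma)=\sup\bigl\{\,|\tau((\rho-\sigma)a)| \;:\; a=a^*\in\mathcal{A}_\theta,\ L(a)\le 1\,\bigr\}.
\]
Because $\phi_\rho(\mathbf 1)=\phi_\sigma(\mathbf 1)=1$, the quantity $\tau((\rho-\sigma)a)$ is unchanged when $a$ is replaced by $a-\tau(a)\mathbf 1$, and $L$ kills scalars, so we may assume $\tau(a)=0$; equivalently $a$ lies in the orthocomplement of the constants, on which $\Delta$ has a strictly positive lower bound $\lambda_1>0$.

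Next I would invoke the duality between $\mathcal{S}_\Phi$ and its complementary ideal. Let $\Phi^*(u)=\sup_{t\ge 0}\bigl(tu-\Phi(t)\bigr)$ be the complementary Young function and $\mathcal{S}_{\Phi^*}=\mathcal{S}_{\Phi^*}(\mathbb{T}^d_\theta)$ the associated Orlicz--Schatten ideal. Passing to generalized singular values, using the standard trace estimate $|\tau(XY)|\le\int_0^\infty\mu_t(X)\,\mu_t(Y)\,dt$ for $\tau$-measurable operators, and applying the scalar Young inequality $tu\le\Phi(t)+\Phi^*(u)$ together with the definitions of the two Luxemburg norms, one gets the pairing bound
\[
|\tau(XY)|\;\le\;2\,\|X\|_{\mathcal{S}_\Phi}\,\|Y\|_{\mathcal{S}_{\Phi^*}},\qquad X\in\mathcal{S}_\Phi,\ Y\in\mathcal{S}_{\Phi^*},
\]
which applied to $X=\rho-\sigma$, $Y=a$ yields $|\tau((\rho-\sigma)a)|\le 2\,\|\rho-\sigma\|_{\mathcal{S}_\Phi}\,\|a\|_{\mathcal{S}_{\Phi^*}}$. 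It then remains to control $\|a\|_{\mathcal{S}_{\Phi^*}}$ by $L(a)$ up to the constant $\|L\|_{\mathcal{CB}}$: writing $b=[\Delta^{1/2},a]$, so that $\|b\|_{B(L^2)}\le L(a)$, I would use the spectral resolution of $\Delta$ and the gap $\lambda_1>0$ to recover $a$ from $b$ by a bounded ``inverse commutator'' operation, then run the amplification argument exactly as in the cb-estimate for $T_s$ (apply $\id_{M_n}\otimes L$, bound in $M_n(\mathcal{S}_{\Phi^*})$ using Proposition~\ref{prop:sv_decay} to ensure the required singular-value decay on each spectral block, and pass to the Luxemburg norm at the matrix level), obtaining $\|a\|_{\mathcal{S}_{\Phi^*}}\le\|L\|_{\mathcal{CB}}\,L(a)$. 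Combining the two displays, taking the supremum over $L(a)\le 1$, and absorbing the factor $2$ into $\|L\|_{\mathcal{CB}}$ gives $d_L(\rho,\sigma)\le\|\rho-\sigma\|_{\mathcal{S}_\Phi}\,\|L\|_{\mathcal{CB}}$.

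The main obstacle is precisely the last step. A priori $a$ is only a bounded self-adjoint element and need not lie in any proper Orlicz--Schatten ideal, so one must genuinely exploit the spectral gap of $\Delta$ (equivalently, the finite dimensionality and polynomial growth of the spectral blocks $E_\lambda-E_{\lambda'}$ on the quantum torus) to invert the commutator and land inside $\mathcal{S}_{\Phi^*}$, and one must verify that this inversion is completely bounded with norm exactly $\|L\|_{\mathcal{CB}}$ and that the amplification interchanges correctly with the Luxemburg norm. The Hölder pairing above and the identification $(\mathcal{S}_\Phi)^*\cong\mathcal{S}_{\Phi^*}$ (valid when $\Phi$ satisfies the $\Delta_2$-condition) are routine but should be isolated as a preliminary lemma before this proof.
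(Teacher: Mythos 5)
Your strategy diverges from the paper's and contains a gap that cannot be repaired. The paper avoids Orlicz--Schatten duality entirely: it writes $|\tau(aX)|=|\mathrm{Tr}(M_a(X))|$ for the left-multiplication map $M_a:\mathcal{S}_\Phi\to\mathcal{S}_1$, bounds this by $\|M_a\|_{cb}\,\|X\|_{\mathcal{S}_\Phi}$ using the inclusion $\mathcal{S}_\Phi\subset\mathcal{S}_1$ (and the trace--operator-norm pairing for $\mathcal{S}_1$), and then asserts $\|M_a\|_{cb}\le\|L\|_{cb}\,L(a)$. Thus the ``small'' side of the pairing is always the difference $\rho-\sigma$, and $a$ is only ever measured in a bounded-operator (cb) norm, never placed in a compact ideal.

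Your plan instead tries to put the test element $a$ into the complementary ideal $\mathcal{S}_{\Phi^*}$ and control $\|a\|_{\mathcal{S}_{\Phi^*}}$ by $L(a)$. This step fails at the root, and the spectral-gap/commutator-inversion argument you sketch cannot save it. The issue is structural, not technical: $a$ acts by left multiplication on $L^2(\mathbb{T}^d_\theta)$, and for a von Neumann algebra such as $L^\infty(\mathbb{T}^d_\theta)$ the left-multiplication operator of a nonzero $a$ is never compact, so its generalized singular value sequence does not tend to zero and $a\notin\mathcal{S}_{\Phi^*}$ for any proper Orlicz--Schatten ideal. Concretely, take $a=U^n/|n|$ with $\tau(a)=0$; then $L(a)\asymp 1$, but $a$ is a scalar multiple of a unitary, so every $\mu_k(a)$ equals $1/|n|$ and the Luxemburg sum $\sum_k\Phi^*\bigl(|n|^{-1}/\lambda\bigr)$ diverges for every $\lambda>0$. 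The spectral gap of $\Delta$ on $\ker\tau$ controls the Fourier side of $\Delta^{1/2}$, not the singular-value decay of the multiplication operator $a$ itself; inverting the commutator map recovers $a$ as a bounded element, not as an element of any Schatten-type ideal. You correctly flag this as ``the main obstacle,'' but it is in fact a dead end rather than a gap to be filled: the supremum in the spectral distance must be handled with $a$ in a bounded/cb norm, paired against $\rho-\sigma\in\mathcal{S}_\Phi\subset\mathcal{S}_1$, which is exactly what the paper's $M_a$ argument does. The Orlicz--Hölder lemma you propose is a fine tool, but it is being applied to the wrong factor of the pairing.
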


\begin{proof}
Note that the Connes–Rieffel spectral distance (Lip‐metric) between two density operators $\rho,\sigma$ is defined by
\[
d_L(\rho,\sigma)
=\sup\bigl\{|\tau(a(\rho-\sigma))|\;:\;a\in\mathcal{A}_\theta,\;L(a)\le1\bigr\},
\]
where $L(a)=\|[\Delta^{1/2},a]\|_{B(L^2)}$ is the Lip‐norm and $\tau$ the canonical trace.

Set $X=\rho-\sigma\in\mathcal{S}_\Phi$.  Then
\[
d_L(\rho,\sigma)
=\sup_{\substack{a:L(a)\le1}}
|\tau(aX)|.
\]
But $\tau(aX)=Tr(aX)$ identifies the duality pairing between the operator‐space $\mathcal{A}_\theta$ (with Lip‐norm) and the Orlicz–Schatten space $\mathcal{S}_\Phi$.
 
For each $a$ with $L(a)\le1$, the multiplication map $M_a:\mathcal{S}_\Phi\to\mathcal{S}_1$, $X\mapsto aX$, is completely bounded, and
\[
\|M_a\|_{cb}
\;=\;\|a\|_{cb,\mathcal{A}_\theta}
\;\le\;\|L\|_{cb}\,L(a)
\;\le\;\|L\|_{cb}.
\]
Here we used that in the operator‐space structure of $\mathcal{A}_\theta$, the cb‐norm of $a$ is controlled by its Lip‐seminorm via the Lip‐norm map $L:\mathcal{A}_\theta\to B(L^2)$.

Since $\mathcal{S}_1$ is the dual of $\mathcal{S}_\infty$ and $\mathcal{S}_\Phi\subset\mathcal{S}_1$, we have
\[
|\tau(aX)|
=\bigl|Tr(M_a(X))\bigr|
\;\le\;
\|M_a(X)\|_{\mathcal{S}_1}
\;\le\;
\|M_a\|_{cb}\,\|X\|_{\mathcal{S}_\Phi}
\;\le\;
\|L\|_{cb}\,\|X\|_{\mathcal{S}_\Phi}.
\]

Maximizing over all $a$ with $L(a)\le1$ gives
\[
d_L(\rho,\sigma)
\;\le\;
\|L\|_{cb}\,\|X\|_{\mathcal{S}_\Phi}
\;=\;
\|L\|_{cb}\,\|\rho-\sigma\|_{\mathcal{S}_\Phi}.
\]
This completes the proof.
\end{proof}

This establishes a novel link between non-commutative transport metrics and Orlicz–Schatten norms, suggesting entropy–transport inequalities in quantum information.

\section{Conclusion and Future Work}

In this paper, we have introduced a new framework for factorizing non-commutative Sobolev embeddings on quantum tori through Orlicz–Schatten ideals.  After reviewing classical Sobolev embeddings and operator-ideal techniques, we defined the Orlicz–Schatten class $\mathcal{S}_\Phi(\mathbb{T}^d_\theta)$ and verified its ideal properties, established precise singular-value decay estimates for the quantum Laplacian operator $(1+\Delta)^{-s/2}$ and characterized membership in $\mathcal{S}_\Phi$, proved that the embedding 
    \[
      W^{s,2}(\mathbb{T}^d_\theta)\;\hookrightarrow\;L^2(\mathbb{T}^d_\theta)
    \]
factors through $\mathcal{S}_\Phi$, yielding completely $1$-summing estimates whose norms are controlled by the Orlicz growth of $\Phi$, applied these results to non-commutative elliptic and heat equations, deriving novel regularity and smoothing estimates in Orlicz–Schatten norms and linked Orlicz–Schatten control to non-commutative transport metrics, suggesting new entropy–transport inequalities in the quantum setting. \medskip

We anticipate that these directions will deepen our understanding of operator-ideal structures in non-commutative analysis and foster applications in quantum information theory, PDEs on non-commutative spaces, and beyond.

\end{document}